\documentclass[pdflatex,sn-mathphys-num]{sn-jnl}

\usepackage{graphicx}%
\usepackage{multirow}%
\usepackage{amsmath,amssymb,amsfonts}%
\usepackage{amsthm}%
\usepackage{mathrsfs}%
\usepackage[title]{appendix}%
\usepackage{xcolor}%
\usepackage{textcomp}%
\usepackage{manyfoot}%
\usepackage{booktabs}%
\usepackage{algpseudocode}%
\usepackage{listings}%
\usepackage{float}
\usepackage{color}
\usepackage{mathtools}
\usepackage{breqn}
\usepackage{indentfirst}
\usepackage{arydshln}
\usepackage{enumerate}
\usepackage{cases}
\usepackage{seqsplit}
\usepackage{subfigure}
\usepackage{caption}
\usepackage{colortbl}
\usepackage{makecell}

\lstdefinestyle{maplestyle}{
    basicstyle=\ttfamily\small,
    backgroundcolor=\color{gray!8},
    frame=single,
    framerule=0.4pt,
    rulecolor=\color{gray!60},
    breaklines=true,
    breakindent=0pt,
    postbreak=\mbox{\textcolor{gray}{$\hookrightarrow$}\space},
    columns=flexible,
    keepspaces=true,
    xleftmargin=1em,
    xrightmargin=1em,
    aboveskip=6pt,
    belowskip=6pt,
}

\lstdefinestyle{mapleoutput}{
    style=maplestyle,
    backgroundcolor=\color{blue!5},
    rulecolor=\color{blue!40},
}

\newcommand{\defword}{\textit}

\theoremstyle{thmstyleone}%
\newtheorem{theorem}{Theorem}
\newtheorem{corollary}{Corollary}

\newtheorem{lemma}{Lemma}

\theoremstyle{thmstyletwo}%
\newtheorem{example}{Example}%
\newtheorem{remark}{Remark}%

\theoremstyle{thmstylethree}%
\newtheorem{definition}{Definition}%

\usepackage{tikz}
\usetikzlibrary{fadings}
\usetikzlibrary{shapes,arrows,chains}
\tikzstyle{dec} = [trapezium, trapezium left angle = 80,trapezium right angle=100,minimum width=3cm,minimum height=0.7cm,text centered,draw=black,fill=green!30]
\tikzstyle{inp} = [rectangle,rounded corners, minimum width=3cm,minimum height=0.7cm,text centered, draw=black,fill=red!30]
\tikzstyle{pro} = [rectangle,minimum width=2cm,minimum height=0.7cm,text centered,draw=black,fill=blue!30]
\tikzstyle{arrow} = [thick,->,>=stealth]
\tikzstyle{arrow1} = [thick,-,>=stealth]
\tikzstyle{point}=[coordinate,on grid]

\begin{document}

\title[Minimal multistable zero-one networks]{The Ubiquity of Three Steady States: Minimal Multistable Zero-One Reaction Networks}


\author*{\fnm{Xiaoxian} \sur{Tang}}
\email{xiaoxian@buaa.edu.cn}

\author{\fnm{Jiandong} \sur{Zhang}}


\affil{\orgdiv{School of Mathematical Sciences}, \orgname{Beihang University}, \orgaddress{\street{Xueyuan Road}, \city{Beijing}, \postcode{100191}, \country{China}}}




\abstract{Biochemical networks with unit stoichiometry arise naturally in receptor--ligand binding and multisite phosphorylation systems. A central question is to identify which networks admit multistability. Because this property can be inherited from smaller subnetworks to larger ones, it is natural to seek the smallest networks with it. In this paper, we completely determine all minimal quadratic zero-one networks that exhibit multistability. Building on recent progress that has narrowed the search to the zero-one networks with $3$ species, $6$ reactions, and dimension $3$, say $(3,6,3)$ family. We develop a computational pipeline to provide a complete characterization of multistability within this class. The primary theoretical contribution is a structural simplification showing that for $(3,m,3)$ quadratic zero-one networks, the Jacobian determinants at consecutive positive steady states have opposite signs, and exactly half of these states are stable. This reduces multistationarity detection and stability verification to a local sign-checking problem, eliminating the need for boundary or asymptotic analysis. Applying this result, we identify $373$ quadratic zero-one networks that exhibit bistability (two stable and one unstable positive steady states). Strikingly, no $(3,6,3)$ quadratic zero-one network admits more than $3$ positive steady states---a sharp bound that falls well below the theoretical BKK bound of $5$ and the B\'ezout bound of $8$. Among the $375$ networks that admit exactly $3$ positive steady states, $373$ are multistable and only $2$ are not. These minimal networks provide essential test cases for understanding how bistability emerges in cell signaling without requiring higher-order molecular collisions.}

\keywords{Biochemical reaction network, Bistability, 
Cell-fate decision, Minimal network, Real root classification}



\maketitle

\section{Introduction}
\label{sec:introduction}

Biochemical reaction networks arising in cell signalling often exhibit 
switch-like behaviour, memory effects, and cell-fate decisions. A 
fundamental mechanism behind such phenomena is the existence of multiple 
stable positive steady states within the same stoichiometric compatibility 
class, namely, multistability 
\cite{ferrell1998allornone,xiong2003memory, craciun2006understanding, conradi2019multistationarity,tyson2022timekeeping}. 
Identifying which networks admit multistability is a central problem in 
the study of reaction networks.

The multistability problem is difficult in general. A common strategy is 
first to detect multistationarity, and then to determine which of the 
corresponding positive steady states are stable. Existing approaches 
include injectivity-based criteria 
\cite{craciun2005multiple1, banaji2016injectivity}, degree-theoretic 
methods \cite{craciun2008homotopy,enciso2014fixed,conradi2017identifying}, 
and tools from computational real algebraic geometry 
\cite{chen2013semialgebraic,maplesoft2020,xia2016automated}. 
However, determining multistability becomes highly challenging as the number of species, reactions, and parameters grows. An important observation is that multistability can often be inherited from smaller subnetworks under certain structural conditions \cite{banaji2018oscillation,banaji2018inheritance}, which motivates the study of small networks as a foundation for understanding larger systems.  
Zero-one reaction networks, where stoichiometric coefficients 
are restricted to $0$ or $1$, have emerged as a fundamental class, 
both because they model many signalling pathways 
\cite{janiakspens2005kinetic,sible2007mathematical,hilioti2008oscillatory} and because their combinatorial simplicity makes large-scale analysis 
tractable. Within this class, the simplest nontrivial case consists of 
linear networks, where every reactant complex has molecularity 
at most one. Such networks are structurally limited: mass-action kinetics 
reduces to linear systems, which cannot exhibit multistability due to the 
absence of nonlinear feedback. The next natural step is the class of 
quadratic zero-one networks, where reactant complexes have 
molecularity at most two, with at least one reaction having reactant 
molecularity exactly two. This setting is the smallest that permits 
nonlinear interactions while remaining chemically realistic---reactions 
with source molecularity at most two are more natural than those 
requiring higher-order collisions---and computationally tractable, as 
the associated mass-action systems remain quadratic 
\cite{banaji2024oscillations}.

Our work is closely related to two recent directions. First, in
\cite{jiao2025multistability}, the smallest zero-one networks admitting
nondegenerate multistationarity and multistability were investigated. In particular, it was
shown that the smallest zero-one networks admitting multistability have $3$ species,
$6$ reactions, and dimension $3$. This identifies the class of $(3,6,3)$ zero-one
networks as the first place where multistability can occur. However, which specific networks within this minimal class actually exhibit multistability remained entirely open.  
In fact, the reason why previous work failed to identify all multistable $(3,6,3)$ zero-one networks is that the search space is enormous, comprising $1{,}367{,}698$ dynamically nontrivial networks. 
Second, in
\cite{jiao2025equivalence,jiao2026equivalence}, an efficient equivalence-reduction algorithm was developed
for zero-one reaction networks based on steady-state ideals, demonstrating that large-scale classification of such networks is computationally feasible. The present paper builds on and extends these two lines of work. We carry out a complete computational search for multistability among all $(3,6,3)$ quadratic zero-one networks. The main contributions of this paper are as follows.
\begin{enumerate}
    \item We establish a structural simplification (Theorem \ref{thm:nomss}) showing that for $(3,m,3)$ quadratic zero-one networks, the Jacobian determinants at consecutive positive steady states always have opposite signs, and exactly half of these states are stable. Consequently, multistability requires at least $3$ nondegenerate positive steady states, and the stability of these states is determined solely by the sign alternation of the Jacobian determinant. This reduces multistationarity detection and stability verification to a local sign-checking problem, eliminating the need for boundary or asymptotic analysis.

    \item We develop a complete computational pipeline for detecting multistable $(3,6,3)$ quadratic zero-one networks, consisting of three stages: (a) Jacobian-sign screening, which uses Theorem 1 to discard all networks whose Jacobian determinant cannot change sign at positive steady states; (b) symbolic real root classification to identify networks admitting between  $3$ and $5$ nondegenerate positive steady states, where the upper bound $5$ is given by the Bernstein--Kushnirenko--Khovanskii (BKK) mixed volume; and (c) stability verification via Jacobian determinant evaluation at each steady state.

    \item Applying this pipeline, we identify $375$ networks that admit exactly $3$ positive steady states, with no network attaining $4$ or $5$. Of these, $373$ are multistable (two stable and one unstable), while only $2$ admit $3$ steady states without multistability (Theorem \ref{thm:main}).
\end{enumerate}

The proof of Theorem~\ref{thm:nomss} relies on a non-standard elimination strategy that departs from the classical computational-algebraic paradigm of triangular decomposition based on resultants and pseudo-division \cite{chen2013semialgebraic,xia2016automated}. Another well-established approach uses Gr\"obner basis triangularization grounded in the Shape Lemma \cite{gianni1989algebraic},  
with subsequent extensions such as the Rational Univariate Representation (RUR) reducing multivariate algebraic systems to univariate rational function representations \cite{rouillier1999solving}. Our method differs fundamentally from both paradigms: instead of eliminating variables to obtain a polynomial or rational univariate function, we construct a univariate function via the positive isoline property. This function is not a polynomial, or even a rational function; its expression may involve radicals arising from the quadratic formula. Nevertheless, its zeros are in one-to-one correspondence with the positive steady states, and the sign of its derivative at each zero encodes the stability of the corresponding state. This construction is possible precisely because the zero-one quadratic structure guarantees the existence of a positive isoline, which would fail for general polynomial systems. In contrast to degree-theoretic methods that typically require compactification at infinity \cite{craciun2008homotopy,conradi2017identifying}, Theorem~\ref{thm:nomss} achieves a structural simplification: for $(3,m,3)$ quadratic zero-one networks, multistationarity and multistability can be determined entirely by the interior behavior of the mass-action system, without recourse to boundary analysis or asymptotic arguments. This reduces the global dynamics to a local sign-checking problem and provides the theoretical foundation for our computational pipeline.

The complete classification of multistable $(3,6,3)$ quadratic zero-one networks provides a catalogue of minimal biochemical switches. These $373$ inequivalent representatives are the smallest zero-one reaction networks capable of generating bistability, making them fundamental building blocks for cell-fate decisions \cite{ferrell1998allornone,xiong2003memory}, memory effects \cite{tyson2022timekeeping}, and switch-like responses in signaling pathways \cite{bagowski2001bistability,craciun2006understanding}. Because multistability can be inherited from subnetworks to larger systems \cite{banaji2018oscillation,banaji2018inheritance}, these minimal networks offer essential test cases for synthetic biology and for the design of robust biochemical circuits. Moreover, the sharp bound of exactly $3$ positive steady states suggests that the combinatorial constraints of zero-one stoichiometry severely restrict the complexity of dynamical behavior, even in the minimal regime where nonlinearity first becomes possible.

The rest of the paper is organized as follows. In Section~\ref{sec:background}, we
review the basic notions on reaction networks,  nondegeneracy, multistationarity,
and multistability. In Section~\ref{sec:thm}, we establish the
structural simplification for $(3,m,3)$ quadratic zero-one networks (Theorem~\ref{thm:nomss})
and develop the positive isoline framework underlying its proof (for readers primarily interested in the computational methodology and biological applications, the technical details of the proof presented in Section \ref{sec:proof} may be skipped without loss of continuity). In Section~\ref{sec:methods}, we present our
computational framework for detecting multistable networks, combining
Jacobian-sign screening, real root classification, and stability analysis. In
Section~\ref{sec:results}, we report the computational result (Theorem \ref{thm:main}), illustrate
the two exceptional networks that admit $3$ positive steady states without
multistability, and present a preliminary analysis of the sharp bound of $3$
positive steady states. Finally, Section \ref{sec:discussion} summarizes our findings and outlines potential directions for future research, particularly regarding the structural analysis of the identified multistable networks.

\section{Background}
\label{sec:background}
 
A \defword{reaction network} $G$ on species $X_1,\dots,X_s$ consists of $m$
reactions
\[
\mu_{1j}X_1+\cdots+\mu_{sj}X_s \xrightarrow{\kappa_j}
\nu_{1j}X_1+\cdots+\nu_{sj}X_s,\qquad j=1,\dots,m,
\]
with nonnegative integer coefficients $\mu_{ij},\nu_{ij}$ and rate constants
$\kappa_j>0$. We assume that $(\mu_{1j},\dots,\mu_{sj})\neq(\nu_{1j},\dots,\nu_{sj})$ for all $j$. Its \defword{stoichiometric matrix} and \defword{reactant matrix} are respectively 
\({\mathcal N}\)  and \({\mathcal Y}\in\mathbb{Z}^{s\times m}\), with
\({\mathcal N}_{ij}=\nu_{ij}-\mu_{ij}\) and \({\mathcal Y}_{ij}=\mu_{ij}\). The image
\(S=\operatorname{im}{\mathcal N}\) is called the
\defword{stoichiometric subspace}, and
\(d=\operatorname{rank}N\) is the  \defword{dimension}. If $s=d$, then the network is \defword{full-dimensional}. The network is \defword{zero-one} if 
$\mu_{ij},\nu_{ij}\in\{0,1\}$ for all $i,j$. We call it \defword{quadratic} if for all $j\in \{1,\ldots,m\}$, we have  $\sum_{i=1}^s\mu_{ij}\le 2$ and there exists $j^*$ such that $\sum_{i=1}^s\mu_{ij^*}=2$. 
 
Under mass-action kinetics the concentration vector
$x\in\mathbb{R}_{\ge0}^s$ evolves by $\dot{x}=f(\kappa,x):={\mathcal N}\,v(\kappa,x)$,
where $v(\kappa,x):=(\kappa_1x^{\mu_1},\dots,\kappa_mx^{\mu_m})^\top$ and
$x^{\mu_j}:=\prod_{i=1}^s x_i^{\mu_{ij}}$. A \defword{positive steady state} is a
vector $x^*\in\mathbb{R}_{>0}^s$ with $f(\kappa,x^*)=0$, and the
\defword{steady-state system} is $ f \subseteq\mathbb{Q}[\kappa, x]$.
If there exists $\kappa \in {\mathbb R}^m_{>0}$ such that $G$ has at least one positive steady state, then we say $G$ is \defword{consistent}. 
Let
\(
\operatorname{Jac}_f(\kappa,x)
\)
denote the Jacobian matrix of \(f\) with respect to \(x\).
A positive steady state \(x^*\) is \defword{nondegenerate} if
\(
\operatorname{im}
\left(
\operatorname{Jac}_f(\kappa,x^*)|_S
\right)
=
S.
\)
Notice that 
if $s=d$, then \defword{\(x^*\) is nondegenerate} if and only if 
$\operatorname{Jac}_f(\kappa,x^*)$ is full rank. 
A steady state $x^*$ is \defword{exponentially stable}, or simply
\defword{stable}, if it is nondegenerate and all nonzero
eigenvalues of \(\operatorname{Jac}_f(\kappa,x^*)\) have negative real parts
\cite{jiao2025multistability}. If a network $G$ has at least one nondegenerate steady state for certain $\kappa\in {\mathbb R}^m_{>0}$, then we say \defword{$G$ is nondegenerate}.  
 
Two networks \defword{have the same form} if one is obtained from the other by
relabeling species and reactions, and they are \defword{equivalent} if, after
such relabeling, they generate the same steady-state ideal generated by $f$ in ${\mathbb Q}(\kappa)[x]$ 
\cite{jiao2025equivalence}. Removing equivalent networks is an important 
preprocessing step in our search, as it greatly reduces the number of systems
that must be analyzed symbolically.
 
A network admits \defword{multistationarity} if there exists
$\kappa\in {\mathbb R}^{m}_{>0}$ giving at least two positive steady states in the same
stoichiometric compatibility class, and \defword{multistability} if at least
two of them are stable. For any full-dimensional network  
 (for instance, $(3,m,3)$ networks studied here), the network has no
conservation laws and the unique compatibility class is the whole positive
orthant. Multistationarity (Multistability) is therefore simply the existence of at least two (stable)
positive steady states of $f=0$.

\section{Theory}\label{sec:thm}

Theorem \ref{thm:nomss} is the main theoretical result of the paper. It establishes that for a $(3,m,3)$ quadratic zero-one network with $N \geq 2$ nondegenerate positive steady states, the Jacobian determinants at consecutive steady states (when ordered by some coordinate) always have opposite signs, which implies that stable and unstable steady states must alternate along that coordinate direction. This alternating sign property, together with the explicit formula for counting stable states given in \eqref{eq:formula}, provides the theoretical foundation for the computational method developed in the next section.

\begin{theorem}\label{thm:nomss}
If a $(3, m, 3)$ quadratic zero-one network has $N$ ($N\geq 2$) nondegenerate positive steady states for a given $\kappa \in \mathbb{R}^m_{>0}$, then the following hold:

\begin{enumerate}[(1)]
    \item There exists an ordering of these steady states as $x^{(1)}, \ldots, x^{(N)}$ with respect to a common coordinate, meaning that there is some $k\in \{1, 2, 3\}$ such that $x_k^{(1)}<\ldots<x_k^{(N)}$, and for each $\ell\in \{1, \ldots, N-1\}$, 
    \[
    \det\!\left(\operatorname{Jac}_f(\kappa, x^{(\ell)})\right)\det\!\left(\operatorname{Jac}_f(\kappa, x^{(\ell+1)})\right)<0.
    \]
    \item The number of stable positive steady states is 
    \begin{align}\label{eq:formula}
    \begin{cases}
    \frac{N}{2}, & \text{if } N \text{ is even},\\
    \frac{N+1}{2} \text{ or } \frac{N-1}{2}, & \text{if } N \text{ is odd}.
    \end{cases}
    \end{align}
\end{enumerate}
\end{theorem}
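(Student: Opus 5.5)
The strategy is the one the introduction calls the positive isoline construction. We reduce the three-dimensional steady-state system $f=0$ to a single equation in one coordinate $x_k$ along a curve of positive points. On that curve the positive steady states are simple zeros of a univariate function $h(x_k)$, and the sign of $h'$ at each zero is tied to the sign of $\det\operatorname{Jac}_f$. Since consecutive simple zeros of a $C^1$ function on an interval have derivatives of opposite sign, consecutive steady states ordered by $x_k$ have Jacobian determinants of opposite sign. This gives part (1).

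\textbf{Step 1: structure of $f$.} For a $(3,m,3)$ quadratic zero-one network, each $f_i$ is a polynomial of degree at most two. Moreover, $x_i^2$ never appears, because monomials are squarefree ($\mu_{ij}\in\{0,1\}$). Hence each $f_i$ is affine in every single variable.

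\textbf{Step 2: the isoline.} Choose two equations, say $f_a$ and $f_b$, and a coordinate $x_k$. The aim is that, for each fixed $x_k>0$, the system $f_a=f_b=0$ has at most one positive solution $(x_i,x_j)=\phi(x_k)$, defined smoothly on an open interval $I$. Concretely:
\begin{itemize}
\item Solve $f_a=0$ for $x_i$; this is possible because $f_a$ is affine in $x_i$.
\item Substitute into $f_b$. This gives a quadratic in $x_j$ whose coefficients are functions of $x_k$.
\item Use sign patterns of the coefficients, forced by the zero-one structure (for instance, a term $-\kappa x_ix_j$ in $f_i$ occurs only when $X_i$ is consumed), to show that exactly one root is positive. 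The quadratic formula then produces the radicals mentioned in the introduction.
\end{itemize}
This requires a case analysis over which monomials can occur. One may need to permute $(a,b,k)$, or use a Descartes-type count of positive roots, to find an ordering that works. The case analysis is also why degenerate coefficient vanishing must be handled separately.

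\textbf{Step 3: relating $h'$ to the Jacobian.} Define $h(x_k)=f_c(\phi(x_k),x_k)$ on $I$. The positive zeros of $h$ are exactly the positive steady states. By implicit differentiation, $\phi'=-(\partial_{(x_i,x_j)}(f_a,f_b))^{-1}\partial_{x_k}(f_a,f_b)$. A Schur-complement computation then gives
\[
\det \operatorname{Jac}_f(\kappa,x)=\pm\det\frac{\partial(f_a,f_b)}{\partial(x_i,x_j)}\; h'(x_k)
\]
at steady states. We must show that the $2\times2$ minor $D=\det\partial(f_a,f_b)/\partial(x_i,x_j)$ has constant nonzero sign along the isoline. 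I expect this to follow from the uniqueness and transversality of the positive root in Step 2: $D$ vanishing would mean a double root, i.e.\ the discriminant vanishes, which is excluded on the interior of $I$.

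Given this, nondegeneracy means $h'\neq0$ at each zero. So the zeros are simple and ordered $x_k^{(1)}<\dots<x_k^{(N)}$ within $I$ (connectedness of $I$ is needed here). Between consecutive simple zeros $h$ changes sign, so the values $h'(x_k^{(\ell)})$ alternate in sign, and therefore $\det\operatorname{Jac}_f$ alternates. This proves (1).

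\textbf{Step 4: counting stable states.} In dimension $3$, a stable state has all eigenvalues with negative real part, so $\det\operatorname{Jac}_f<0$. Hence states with $\det>0$ are unstable. For the formula we need the converse on this class: every state with $\det<0$ is stable. I would try to prove this by showing that the trace and the Routh--Hurwitz quantity $a_1a_2-a_3$ are automatically favorable at steady states. These are expressed through the isoline quantities, using $\mathrm{tr}<0$ together with the sign of $D$ (which is a principal minor). This converse is the weakest point of the plan. Once alternation holds and stability is equivalent to $\det<0$, the number of stable states is $N/2$ when $N$ is even and $(N\pm1)/2$ when $N$ is odd, which is (2).

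The main obstacle is Step 2: proving, uniformly over all admissible monomial patterns, that a positive isoline with a unique positive branch exists on a connected interval. The second is the stability converse in Step 4.
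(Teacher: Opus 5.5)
Your architecture matches the paper's: eliminate one variable, parametrize the remaining positive solutions by $x_k$, relate $h'$ to $\det\operatorname{Jac}_f$ through a $2\times2$ minor, use sign alternation at simple zeros, and finish with Routh--Hurwitz. Step 3 is fine. $D\neq0$ along the isoline, together with connectedness of $\mathcal I$, already gives the constant sign needed for part (1), even without the paper's cited inequality $D\ge0$. The gaps are in the two steps that carry the content, Steps 2 and 4.

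\textbf{Step 2.} The missing idea is the specific elimination: remove $x_i$ using its \emph{own} equation, $g_{i\to j}=\frac{\partial f_i}{\partial x_i}f_j-\frac{\partial f_j}{\partial x_i}f_i$. The zero-one sign rule, applied to $f_i$ and $f_j$, then forces the $x_j^2$-coefficient $C_2^{i\to j}$ to have only positive terms and the constant $C_0^{i\to j}$ to have only negative terms (or to vanish identically). So whenever $C_0^{i\to j}C_2^{i\to j}\not\equiv0$, there is exactly one simple positive root for every $x_k>0$. The eliminated coordinate $x_i$ is then automatically positive, because $\partial f_i/\partial x_i$ has only negative terms and the rest of $f_i$ only positive ones. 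If instead you solve some $f_a$ with $a\neq i$ for $x_i$, the multiplier $\partial f_a/\partial x_i$ has mixed signs and you get no such control.

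The real difficulty is the case $C_0^{i\to j}C_2^{i\to j}\equiv0$ for all six permutations. The paper settles it by a ten-part case analysis that uses the quadratic hypothesis ($a^{(\ell)}_{1,2,3}\equiv0$). In these cases, whether a positive root exists is governed by the sign of $C_1^{i\to j}(x_k)$. One must show that $C_1^{i\to j}$ has at most one positive zero in $x_k$, so that $\mathcal I$ is an interval.

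Worse, in some configurations you get $g_{1\to2}=C_1^{1\to2}(x_3)\,x_2$ and $g_{1\to3}=C_1^{1\to3}(x_2)\,x_3$. These eliminations each constrain a single variable and define no isoline at all. The paper does not look for an isoline there. It proves directly that such a network has at most one positive steady state, so the theorem is vacuous. Your plan to build an isoline ``uniformly over all monomial patterns'' has no such alternative branch.

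\textbf{Step 4.} Part (2) needs $\det\operatorname{Jac}_f<0\Rightarrow$ stable, i.e.\ the Hurwitz inequality $b_1b_2-b_3>0$ whenever $b_3=-\det\operatorname{Jac}_f>0$. A negative trace and positive principal $2\times2$ minors do not imply this. Take the matrix with $-1$ on the diagonal, $1$ in positions $(2,1)$ and $(3,2)$, $-c$ in position $(1,3)$, and zeros elsewhere. It has trace $-3$, all principal $2\times2$ minors equal to $1$, determinant $-(1+c)<0$, and characteristic polynomial $(\lambda+1)^3+c$. So $b_1b_2-b_3=8-c<0$ for $c>8$, and it is unstable.

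The inequality you need is a genuine structural property of three-species zero-one networks at nondegenerate positive points. The paper imports it from Tang and Wang's analysis of Hopf bifurcations in zero-one networks. Without it you only know that states with $\det\operatorname{Jac}_f>0$ are unstable. Alternation then bounds the number of stable states from above but does not give the exact count in part (2).
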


\begin{remark}\label{rmk:bezout}
Theorem \ref{thm:nomss} implies that a multistable $(3,m,3)$ quadratic zero-one network admits at least $3$ nondegenerate positive steady states. 
By B\'ezout's bound, a $(3,6,3)$ quadratic zero-one network admits at most $8$ (positive) steady states. A sharper bound follows from the BKK mixed volume \cite{bernstein1975numberofroots}: since each steady-state polynomial $f_i$ is a $\mathbb{Q}(\kappa)$-linear combination of monomials from $\mathcal{M}=\{1,x_1,x_2,x_3,x_1x_2,x_1x_3,x_2x_3\}$, the Newton polytope $P$ (the convex hull of $\mathcal{M}$ in $\{0,1\}^3$) has volume $5/6$, giving a BKK bound of $3!\cdot\operatorname{Vol}(P)=5$ (positive) steady states.
\end{remark}

The proof details of Theorem \ref{thm:nomss} are presented in Section \ref{sec:proof}. Before diving into the technical proof, we illustrate the main ideas and framework through a concrete example.

\begin{example}[Cell-fate decision network]\label{ex:pip}  Consider the $(3,6,3)$  quadratic zero-one network: 
\[
\begin{array}{@{}l@{}}
0 \xrightarrow{\kappa_1} \mathbf{P} \xrightarrow{\kappa_2} \mathbf{D}+\mathbf{P}+\mathbf{C},
\\[4pt]
\mathbf{P}+\mathbf{C} \xrightarrow{\kappa_3} \mathbf{D} \xrightarrow{\kappa_4} \mathbf{D}+\mathbf{C},
\\[4pt]
\mathbf{D}+\mathbf{C} \xrightarrow{\kappa_5} 0,\quad \mathbf{D}+\mathbf{P} \xrightarrow{\kappa_6} 0.
\end{array}
\]
This network admits a gene-regulatory interpretation: 
$\mathbf{P}$ (pluripotency factor), $\mathbf{D}$ (differentiation factor), 
and $\mathbf{C}$ (cofactor). The self-activation of $\mathbf{P}$ (reaction~2) 
and the mutual antagonism between $\mathbf{P}$ and $\mathbf{D}$ (reactions~5--6) 
form a minimal switch motif, with $\mathbf{C}$ mediating the 
$\mathbf{P}\to\mathbf{D}$ transition (reactions~3--4).

Denote the concentrations of $\mathbf{D}$, $\mathbf{P}$ and $\mathbf{C}$  by $x_1, x_2$ and $x_3$. The steady-state system is
\[
\begin{aligned}
f_1={}&
 \kappa_2x_2-\kappa_6x_1x_2-\kappa_5x_1x_3
 +\kappa_3x_2x_3,\\
f_2={}&
 \kappa_1-\kappa_6x_1x_2-\kappa_3x_2x_3,\\
f_3={}&
 \kappa_4x_1+\kappa_2x_2-\kappa_5x_1x_3
 -\kappa_3x_2x_3.
\end{aligned}
\]
By eliminating $x_1$ from $f_1$ and $f_2$, we obtain the polynomial $g_{1\to 2}$, which is quadratic in $x_2$ (with coefficients in $\mathbb{Q}[\kappa, x_3]$): 
\[
\begin{aligned}
g_{1\to 2} &= (2\kappa_3\kappa_6x_3 + \kappa_2\kappa_6 )x_2^2 + (\kappa_3\kappa_5x_3^2 - \kappa_1\kappa_6) x_2 - \kappa_1\kappa_5 x_3. 
\end{aligned}
\]
Here, the coefficients of $x_2^2$ and the constant term are denoted by $C_2^{1\to 2}$ and $C_0^{1\to 2}$, respectively. Note that
\[
C^{1\to 2}_0C^{1\to 2}_2= - \kappa_1\kappa_5 x_3(2\kappa_3\kappa_6x_3 + \kappa_2\kappa_6 )<0
\]
for any $(\kappa, x_3)\in {\mathbb R}_{>0}^7$.
So, we can always  get a unique nondegenerate positive solution for $x_2$ from $g_{1\to 2}=0$.
Denote by $x_2=\sigma(\kappa, x_3)$ the positive solution. Similarly, by eliminating $x_1$ from $f_1$ and $f_3$, we obtain
\[
g_{1\to 3} = 2\kappa_3\kappa_5 x_2 x_3^2 + (\kappa_3\kappa_6 x_2^2 - \kappa_3\kappa_4 x_2) x_3 - \kappa_2\kappa_6 x_2^2 - \kappa_2\kappa_4 x_2.
\]
At this point, substituting $x_2=\sigma(\kappa, x_3)$ into $g_{1\to 3}$, we denote the resulting function by $h(\kappa, x_3)$.

It is not difficult to see that for every $\kappa\in {\mathbb R}^6_{>0}$, the positive real solutions of $h=0$ with respect to $x_3$ are in one-to-one correspondence with the positive steady states of the original steady-state system $f$. Moreover, by the construction of $h$ and the structure of zero-one networks, one can further show that the sign of the derivative of $h$ at each positive solution differs from the sign of $\det(\mathrm{Jac}_f)$ at the corresponding positive steady state by exactly one negative sign. Therefore, the conclusion of Theorem~\ref{thm:nomss} follows from the classical fact that the derivative of a univariate ${\mathcal C}^1$-function alternates in sign at consecutive nondegenerate roots.
\end{example}

In Section \ref{sec:proof},  we first present three fundamental lemmas: Lemma \ref{lem:sign-alternation} concerns the alternating signs of derivatives at consecutive roots of a univariate function, Lemma \ref{lm:01} describes the sign pattern of coefficients in the steady-state system of a zero-one network, and Lemma \ref{lm:stab} gives a necessary and sufficient determinant criterion for the stability of positive steady states in $(3,m,3)$ networks.

Next, we establish an elimination framework for $(3,m,3)$ zero-one networks analogous to that in Example \ref{ex:pip}: the ternary system is first reduced to a binary system, and by Lemma \ref{lm:01} the coefficients of the resulting polynomial $g_{i\to j}$ along this elimination path exhibit specific sign patterns (see Lemmas \ref{lm:csign} and \ref{lm:gsys}). Furthermore, when $g_{i\to j}=0$ can always determine (or determine for each parameter value $\kappa$) a unique positive solution for $x_j$ as in Example 1, we say that this elimination path has the (local) positive isoline property (see Definition \ref{def:positive-isoline}). Lemma \ref{lm:hsys} shows that the local positive isoline property is sufficient to derive the conclusion of Theorem \ref{thm:nomss}. Lemma \ref{lm:c0c2nonzero} proves that a general enough $(3,m,3)$ zero-one network always has the positive isoline property. Lemma \ref{lm:c0c2zero} then shows that when certain boundary cases occur, a $(3,m,3)$ quadratic zero-one network can still guarantee the local positive isoline property, thereby completing the entire proof. Note that all lemmas except Lemma \ref{lm:c0c2zero} do not require the quadratic condition. In fact, if the network is not quadratic, the local positive isoline property may fail; for the purpose of focusing on the search for minimal multistable zero-one networks, we omit the detailed discussion of non-quadratic networks.

\subsection{Proof of Theorem \ref{thm:nomss}}\label{sec:proof}


\begin{lemma}\label{lem:sign-alternation}
Let $H(x)\in {\mathcal C}^1(\mathbb{R})$ have $N$ distinct nondegenerate real zeros 
$x_1<x_2<\dots<x_N$ in an open interval ${\mathcal I}\subset \mathbb{R}$, i.e.\ $H(x_i)=0$ and $H'(x_i)\neq 0$ for all $i$. 
Then the derivatives at consecutive zeros have opposite signs:
\[
H'(x_i)H'(x_{i+1})<0,\qquad i=1,\dots,N-1.
\]
\end{lemma}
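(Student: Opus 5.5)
The plan is a direct argument by contradiction using only the intermediate value theorem. Throughout, I read the hypothesis as saying that $x_1<\dots<x_N$ are \emph{all} the zeros of $H$ in ${\mathcal I}$. This is how the lemma will be applied in the proof of Theorem~\ref{thm:nomss}, where every positive zero of the reduced univariate function is assumed nondegenerate. The reading is also necessary: with an intervening degenerate zero of odd multiplicity the claim fails. For instance, $H(x)=x(x-1)^3(x-2)$ has $H'(0)=2$ and $H'(2)=2$, even though $0$ and $2$ are its nondegenerate zeros. So the first step is to make this standing assumption explicit.

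Fix $i\in\{1,\dots,N-1\}$ and suppose, for contradiction, that $H'(x_i)H'(x_{i+1})>0$. Products equal to $0$ are excluded by nondegeneracy. By replacing $H$ with $-H$ if necessary, we may assume both derivatives are positive.

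Since $H'(x_i)>0$ and $H(x_i)=0$, the definition of the derivative gives some $\delta>0$ with $H(x)>0$ for $x\in(x_i,x_i+\delta)$. Similarly, $H'(x_{i+1})>0$ gives $H(x)<0$ for $x\in(x_{i+1}-\delta,x_{i+1})$. Shrinking $\delta$ so that $x_i+\delta<x_{i+1}-\delta$, choose $a\in(x_i,x_i+\delta)$ and $b\in(x_{i+1}-\delta,x_{i+1})$. Then $a<b$, $H(a)>0$ and $H(b)<0$.

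The intermediate value theorem, using only continuity of $H$, now yields $c\in(a,b)\subset(x_i,x_{i+1})\subset{\mathcal I}$ with $H(c)=0$. This contradicts the fact that $x_i$ and $x_{i+1}$ are consecutive zeros of $H$ in ${\mathcal I}$. Hence $H'(x_i)H'(x_{i+1})<0$.

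The only real obstacle is the one handled in the first step: recognising that the lemma needs the listed zeros to exhaust the zero set of $H$ in ${\mathcal I}$, or at least that no zero lies between consecutive $x_i$. The analysis itself is routine, and $\mathcal C^1$ regularity is more than enough, since differentiability at the zeros suffices.
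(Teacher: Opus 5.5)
Your proof is correct and is essentially the paper's argument: the paper says that since $H$ has no zeros in $(x_i,x_{i+1})$ it keeps a constant sign there, which fixes the signs of $H'(x_i)$ and $H'(x_{i+1})$, and you run the same intermediate-value-plus-local-sign reasoning as a contradiction. Your standing assumption that the $x_i$ are all the zeros of $H$ in ${\mathcal I}$ is exactly what the paper's proof uses tacitly when it asserts that $H$ has no zeros between consecutive $x_i$, and your example $x(x-1)^3(x-2)$ correctly shows that this assumption cannot be dropped.
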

\begin{proof}
Fix $i\in\{1,\dots,N-1\}$. Since $H$ is continuous and has no zeros in the open interval $(x_i,x_{i+1})$, the function $H$ does not change sign on $(x_i,x_{i+1})$.

Suppose $H>0$ on $(x_i,x_{i+1})$. Then $H$ increases through $x_i$ and decreases through $x_{i+1}$, whence $H'(x_i)>0$ and $H'(x_{i+1})<0$. The case $H<0$ is analogous. In either case $H'(x_i)H'(x_{i+1})<0$.
\end{proof}

\begin{lemma}\label{lm:01}
Given an $(s,m,d)$ zero-one network $G$, let $f$ be its steady-state system.
\begin{enumerate}[(1)]
    \item For every $i\in\{1,\ldots,s\}$, the coefficient of any term in $f_i$ is negative if the term contains $x_i$, and positive otherwise.
    \item The trace of $\mathrm{Jac}_f(\kappa,x)$ is a nonzero polynomial with all negative terms.
    \item If $G$ is full-dimensional ($s=d$) and consistent, then for every $i\in\{1,\ldots,s\}$, $\frac{\partial f_i}{\partial x_i}$ is a nonzero polynomial with all negative terms.
    \end{enumerate}
\end{lemma}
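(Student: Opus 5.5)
The plan is to read off all three items directly from the formula $f_i=\sum_{j=1}^m \mathcal N_{ij}\kappa_j x^{\mu_j}$ together with the zero-one restriction $\mu_{ij},\nu_{ij}\in\{0,1\}$.

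For item (1), consider reaction $j$. Its monomial $x^{\mu_j}$ contains $x_i$ exactly when $\mu_{ij}=1$. In that case $\mathcal N_{ij}=\nu_{ij}-1\in\{0,-1\}$. If instead the monomial does not contain $x_i$, then $\mu_{ij}=0$ and $\mathcal N_{ij}=\nu_{ij}\in\{0,1\}$. So every reaction contributes to $f_i$ a term $\pm\kappa_j x^{\mu_j}$ (or nothing), with sign $-$ precisely when $x_i$ divides the monomial.

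Next I group the contributions by monomial. All contributions to a fixed monomial have the same sign, because whether $x_i$ appears in it depends only on the monomial. Hence no cancellation occurs, and each surviving coefficient in $\mathbb Q[\kappa]$ is a nonzero sum of $\pm\kappa_j$ with a single common sign. This gives (1).

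For item (3), by (1), the terms of $f_i$ containing $x_i$ have negative coefficients. Since $x_i$ appears only to the first power, differentiating gives $\partial f_i/\partial x_i=-\sum_{j:\mu_{ij}=1,\nu_{ij}=0}\kappa_j x^{\mu_j-e_i}$. This has all negative terms, again with no cancellation because the $\kappa_j$ are independent parameters.

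It remains to show that this sum is nonempty, i.e. that some reaction has $\mu_{ij}=1,\nu_{ij}=0$. This is the main obstacle, and it is where consistency and full dimension enter. Suppose no such reaction exists. Then every reaction has $\mathcal N_{ij}\ge 0$, so $f_i$ has only nonnegative terms. Full dimension means row $i$ of $\mathcal N$ is not identically zero, so $f_i$ is a nonzero polynomial with positive coefficients. Then $f_i>0$ on $\mathbb R^s_{>0}$ for every $\kappa$, which contradicts the existence of a positive steady state, i.e. consistency. I should double-check that "nonzero row" really follows from $\operatorname{rank}\mathcal N=s$; it does, since a zero row would force rank at most $s-1$.

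For item (2), the trace is $\sum_i \partial f_i/\partial x_i$. By the computation above, each summand has all negative terms. A sum of polynomials with negative coefficients cannot cancel, so the trace has all negative terms.

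For the trace to be nonzero, I need some reaction $j$ and species $i$ with $\mu_{ij}=1$ and $\nu_{ij}=0$, and here I do not use consistency. Every reaction satisfies $\mu_j\neq\nu_j$. If no species were ever consumed, every reaction would have $\mu_{ij}\le\nu_{ij}$ for all $i$ and hence would be of the form "add species". I expect the intended argument to use that the network is genuinely nontrivial. Most simply, I would note that some reaction has $\mu_j\ne0$ consuming something: a reactant $X_i$ with $\nu_{ij}=0$. This fails if all reactions only add species, e.g. $0\to X_1$. So I would either invoke the paper's standing assumption that the network is consistent, under which item (3) already gives nonzero diagonal entries and hence a nonzero trace, or argue that in the intended setting a network with no consuming reaction is excluded. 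This edge case is where I expect care to be needed.
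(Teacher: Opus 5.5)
Your arguments for (1) and (3) are essentially the paper's. The paper also reads the sign of each contribution $(\nu_{ij}-\mu_{ij})\kappa_jx^{\mu_j}$ off the zero-one constraint. For (3) it writes $f_i=p_ix_i+q_i$ and shows that $p_i\equiv 0$ would make $f_i=q_i$ either the zero polynomial (excluded by full dimension) or a polynomial with only positive terms (excluded by consistency), exactly as you do.

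For (2), your suspicion is correct, and the problem lies in the statement and in the paper's proof, not in your reasoning. The paper argues that if the trace $\sum_i p_i$ vanishes then every $p_i\equiv 0$. It then concludes that in each reaction every species ``either appears in both sides or does not appear at all'', so that reactant and product coincide. That inference is invalid: $p_i\equiv 0$ only excludes $\mu_{ij}=1,\nu_{ij}=0$, and it still permits $\mu_{ij}=0,\nu_{ij}=1$.

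Your example $0\to X_1$ therefore shows that (2) is false as stated. The same happens even among $(3,m,3)$ zero-one networks: $0\to X_1$, $0\to X_2$, $0\to X_3$ has $\mathrm{Jac}_f\equiv 0$. The paper's definition of a reaction network only requires $\mu_j\neq\nu_j$, so such networks cannot be excluded ``in the intended setting''. The first of your two options is the right one: (2) needs an added hypothesis.

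Consistency alone suffices, so you do not need full dimension or item (3). Suppose no reaction consumes any species. Then $\nu_j\ge\mu_j$ componentwise with $\nu_j\neq\mu_j$, so every column of $\mathcal N$ is nonnegative and nonzero. Hence $f(\kappa,x)=\mathcal N v(\kappa,x)$ has a strictly positive component at every $(\kappa,x)\in\mathbb R^{m}_{>0}\times\mathbb R^{s}_{>0}$, and $G$ has no positive steady state.

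The paper uses (2) only in Lemma \ref{lm:stab}, where a positive steady state of a full-dimensional network is given. Either repair works there, so nothing downstream is affected.
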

\begin{proof}
{\it (1)} Notice that every term of 
$f_i$ has the form
\[(\nu_{ij}-\mu_{ij})\kappa_jx^{\mu_j}=(\nu_{ij}-\mu_{ij})\kappa_j\prod_{k=1}^s x_k^{\mu_{kj}},\;\;\; \text{where}\; 
\nu_{ij}, \mu_{ij}\in \{0, 1\}.\] If the term contains 
the variable $x_i$, then 
$\mu_{ij}=1$. Since the term indeed appears in $f_i$, $\nu_{ij}-\mu_{ij}\neq 0$. So, 
we have $\nu_{ij}=0$, and hence the coefficient of the term is $-1$. Similarly, 
if the term  contains no 
$x_i$, then we have 
$\nu_{ij}=1$ and $\mu_{ij}=0$, and hence the coefficient is $1$. 

{\it (2)} Since $G$ is zero-one, 
for any $i\in \{1, \ldots, s\}$,
we can write $f_i=p_ix_i+q_i$, where
$p_i$ and $q_i$ are polynomials in 
${\mathbb Q}[\kappa, x]$
containing no $x_i$. 
By {\it (1)}, $p_i$ must be  
a nonzero polynomial with all negative terms or the zero polynomial. 
So,  the trace of $\operatorname{Jac}_f(\kappa,x)$, i.e., $\sum^s_{i=1}\frac{\partial f_i}{\partial x_i}=\sum^s_{i=1}p_i$, is the zero polynomial or 
a nonzero polynomial with all negative terms.  
If the trace is the zero polynomial, then we must have 
$p_i$ is the zero polynomial for every $i$. 
That means in every reaction, 
every species $X_i$ either appears in both sides or does not appear at all. So, the reactant and the product of every reaction are the same, which is impossible by the definition of reaction network.

{\it (3)} Again, we write $f_i=p_ix_i+q_i$.  If $\frac{\partial f_i}{\partial x_i}=p_i$ is the zero polynomial, then $f_i=q_i$. By {\it (1)}, $q_i$ must be  a nonzero polynomial with all positive terms or the zero polynomial. That means either $f_i$ has no positive solutions for any rate constants, or 
$f_i$ is the zero polynomial, which contradicts to the hypothesis that $G$ is  consistent and full-dimensional. So, $\frac{\partial f_i}{\partial x_i}$ must be a nonzero polynomial with all negative terms. 
\end{proof}

\begin{lemma}
\label{lm:stab}
Given a
$(3, m, 3)$
zero-one network \(G\),  let \(f\) be its steady-state system. For any
\(\kappa\in\mathbb{R}_{>0}^{m}\) and any corresponding nondegenerate positive
steady state \(x\in\mathbb{R}_{>0}^{3}\),  \(x\) is
stable if and only if
\(\det\!\left(\operatorname{Jac}_f(\kappa,x)\right)<0.
\)
\end{lemma}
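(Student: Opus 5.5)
The plan is to reduce the statement to the Routh--Hurwitz criterion for cubic characteristic polynomials. Write $J=\operatorname{Jac}_f(\kappa,x)$ and its characteristic polynomial as
\[
\lambda^3+a_1\lambda^2+a_2\lambda+a_3,\qquad a_1=-\operatorname{tr}J,\quad a_2=M_{12}+M_{13}+M_{23},\quad a_3=-\det J,
\]
where $M_{ij}$ are the principal $2\times 2$ minors of $J$. Since the network is full-dimensional and $x$ is nondegenerate, $J$ has no zero eigenvalue. So \emph{stable} means that all three eigenvalues have negative real part. By Routh--Hurwitz this is equivalent to $a_1>0$, $a_3>0$ and $a_1a_2>a_3$ (these three conditions also force $a_2>0$).

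For necessity, stability gives $\det J=\lambda_1\lambda_2\lambda_3$. A product of three numbers with negative real parts, closed under conjugation, is real and negative. Hence $\det J<0$.

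For sufficiency, assume $\det J<0$, so $a_3>0$. By Lemma~\ref{lm:01}(2), $\operatorname{tr}J$ is a nonzero polynomial all of whose terms are negative. Hence it is strictly negative at every positive $x$ and every $\kappa\in\mathbb{R}^m_{>0}$, and $a_1>0$. It remains to show $a_1a_2-a_3>0$. I would aim for the stronger claim that the polynomial
\[
a_1a_2-a_3=-(\operatorname{tr}J)\,(M_{12}+M_{13}+M_{23})+\det J
\]
is nonnegative on the positive orthant, and strictly positive there. This Hurwitz determinant is classically $-(\lambda_1+\lambda_2)(\lambda_1+\lambda_3)(\lambda_2+\lambda_3)$, and it also equals $-\det$ of the second additive compound $J^{[2]}$. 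So the task is to show that $J^{[2]}$ has negative determinant.

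To attack this I would use the factorization $J=\mathcal N\,\operatorname{diag}(\kappa_jx^{\mu_j})\,\mathcal Y^{\top}\operatorname{diag}(1/x)$. By Cauchy--Binet, each principal minor and each product of diagonal entries then expands as a sum over sets of reactions. The coefficients are products of minors of $\mathcal N$ and $\mathcal Y$, which lie in $\{0,\pm1,\pm2\}$ because the network is zero-one.

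The key structural input is Lemma~\ref{lm:01}(1): the entry $\mathcal N_{ij}=-1$ exactly when $\mathcal Y_{ij}=1$. This lets me pair each reaction's reactant pattern with its net-change pattern. In the expansion of $-(\operatorname{tr}J)a_2+\det J$, the degree-three monomials $\det J$ cancel against the part of $\operatorname{tr}J\cdot a_2$ coming from triples of distinct reactions. What remains should be, for each pair or triple of reactions, a sign-definite expression, namely products like $J_{ii}(J_{ii}J_{jj}-J_{ij}J_{ji})$ regrouped per reaction subset. I would check by a finite case analysis over the possible $\{0,1\}$ columns $(\mu_j,\nu_j)$ in $3$ species that each surviving coefficient is nonnegative, and that at least one is positive; positivity would come from the nonzero trace.

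I expect this step, the sign analysis of $a_1a_2-a_3$ via the Cauchy--Binet expansion and the zero-one sign constraints, to be the main obstacle. If a clean term-by-term argument fails, my fallback is to use the equivalent form $-\det(J^{[2]})>0$. Its diagonal entries $J_{ii}+J_{jj}$ are negative by Lemma~\ref{lm:01}(3), and I would try to show that the off-diagonal structure inherited from the zero-one pattern keeps the sign of this determinant fixed.
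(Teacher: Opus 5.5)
Your reduction is the same as the paper's: Routh--Hurwitz for the cubic, $a_1>0$ from Lemma~\ref{lm:01}(2), and, since $\mathcal H_3=a_3\mathcal H_2$, everything hinges on $\mathcal H_2=a_1a_2-a_3>0$. Your necessity argument via $\det J=\lambda_1\lambda_2\lambda_3$ is also fine. The paper does not prove $\mathcal H_2>0$ itself; it imports it from \cite[Lemma 6.6 (ii)]{tang2023hopf}, applied at a nondegenerate positive steady state.

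Your substitute for that citation is where the proof breaks. You aim for a ``stronger claim'': that $a_1a_2-a_3$ is nonnegative on the whole positive orthant, checked coefficient by coefficient in a Cauchy--Binet expansion with the fluxes $\kappa_jx^{\mu_j}$ treated as free. That claim is false. The Jacobian $3$-cycles $J_{12}J_{23}J_{31}+J_{13}J_{32}J_{21}$ enter $a_1a_2-a_3$ through $\det J$ with a plus sign. Nothing in $\operatorname{tr}J\cdot a_2$ matches them, since every monomial there contains a diagonal entry. A zero-one network can make these cycles negative through a consumption term $-\kappa x_ix_j$ in $f_i$.

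For instance, take the $(3,6,3)$ quadratic zero-one network $0\xrightarrow{k}X_1$, $X_1\xrightarrow{a}X_1+X_2$, $X_2\xrightarrow{d_2}0$, $X_2\xrightarrow{b}X_2+X_3$, $X_3\xrightarrow{d_3}0$, $X_1+X_3\xrightarrow{c}X_3$. It has $f=(k-cx_1x_3,\;ax_1-d_2x_2,\;bx_2-d_3x_3)$ and
\[
a_1a_2-a_3=(d_2+d_3)\bigl(c^2x_3^2+c(d_2+d_3)x_3+d_2d_3\bigr)-abc\,x_1 .
\]
This equals $-92$ at $x=(100,1,1)$ with all rate constants equal to $1$. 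The negative term comes from the reaction triple $T=\{X_1\to X_1+X_2,\ X_2\to X_2+X_3,\ X_1+X_3\to X_3\}$, whose column minors satisfy $\det\mathcal N_T\det\mathcal Y_T=-1$, and no term of $\operatorname{tr}J\cdot a_2$ cancels it. So your cancellation claim fails, and your planned case check would find a negative surviving coefficient. The fallback via $-\det J^{[2]}$ fails identically, because it is the same polynomial.

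What is missing is the steady-state constraint. In the example, $f=0$ forces $abx_1=d_2d_3x_3$, and the expression becomes $(d_2+d_3)c^2x_3^2+c(d_2^2+d_2d_3+d_3^2)x_3+(d_2+d_3)d_2d_3>0$. A correct argument must use $f(\kappa,x)=0$ to dominate the negative cycle terms. Two possible ways:
\begin{enumerate}
\item Restrict the flux vector to $\ker\mathcal N\cap\mathbb R^m_{>0}$, as in the transformed Jacobian of Section~\ref{sec:methods}.
\item Use that each $f_i$ is affine in every $x_j$, so that $x_j\,\frac{\partial f_i}{\partial x_j}=-f_i|_{x_j=0}$ at a steady state.
\end{enumerate}
Either carry out such an argument or cite \cite[Lemma 6.6 (ii)]{tang2023hopf} as the paper does. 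As written, the sufficiency direction ($\det J<0\Rightarrow$ stable) is unproved.
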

\begin{proof}
Assume that the characteristic polynomial of $\operatorname{Jac}_f(\kappa,x)$ is $$
\lambda^3+b_1\lambda^{2}+b_2\lambda+b_{3}.$$
The Hurwitz matrix ${\mathcal H}$ is equal to
$$\begin{pmatrix}
       b_1&1&0\\
       b_3&b_2&b_1\\
         0&0&b_3
     \end{pmatrix}.$$
By  the Routh-Hurwitz criterion \cite[Criterion 1]{TorresFeliu2021},  all non-zero eigenvalues of $\operatorname{Jac}_f(\kappa,x)$ have negative real parts if and only if 
${\mathcal H}_i>0$ for all $i\in \{1, 2, 3\}$, where
${\mathcal H}_1:=b_1$, ${\mathcal H}_2:=b_1b_2-b_3$ and ${\mathcal H}_3:=b_3(b_1b_2-b_3)$.
Notice that 
$b_1=-\sum^3_{i=1}\frac{\partial f_i}{\partial x_i}(\kappa,x)$.
By Lemma \ref{lm:01} {\it (2)}, 
we know that 
${\mathcal H}_1=b_1>0$.
Since $x$ is nondegenerate (i.e., $\det\!\left(\operatorname{Jac}_f(\kappa,x)\right)\neq 0$),  by \cite[Lemma 6.6 (ii)]{tang2023hopf}, we have ${\mathcal H}_2>0$.
Note that ${\mathcal H}_3=b_{3}{\mathcal H}_2$  and $b_{3}=-\det\!\left(\operatorname{Jac}_f(\kappa,x)\right)$. 
So, 
\(x\) is
stable
if and only if $b_3>0$
(i.e., $\det\!\left(\operatorname{Jac}_f(\kappa,x)\right)<0)$. 
\end{proof}

Given a $(3,m,3)$ zero-one network, let $f$ be its steady-state system. For each $\ell \in \{1, 2, 3\}$, we write
\begin{align}\label{eq:fi}
f_{\ell} = a_0^{(\ell)} + a_1^{(\ell)}x_1 + a_2^{(\ell)}x_2 + a_3^{(\ell)}x_3
     + a_{1,2}^{(\ell)}x_1x_2 + a_{1,3}^{(\ell)}x_1x_3 + a_{2,3}^{(\ell)}x_2x_3+a_{1,2,3}^{(\ell)}x_1x_2x_3, 
    \end{align}
where each $a^{\ell}_{*}$ is a linear combination of $\kappa_1, \ldots, \kappa_m$. By Lemma \ref{lm:01} {\it (1)}, we know that $a^{\ell}_{*}$ can be either written as 
$\kappa_{i_1}+\ldots+\kappa_{i_n}$ or $-(\kappa_{i_1}+\ldots+\kappa_{i_n})$. If the corresponding monomial does not appear in $f_i$, then we say $a^{\ell}_{*}\equiv 0$ or $a^{\ell}_{*}$ is the zero polynomial. 

Let $\mathfrak{S}_3$ denote the set of all the permutations of $\{1, 2, 3\}$.
For any $(i, j, k)\in \mathfrak{S}_3$, we define 
\begin{align}
g_{i\to j}:=& \frac{\partial f_i}{\partial x_i} f_j-\frac{\partial f_j}{\partial x_i} f_i. \label{eq:ggeneral}
\end{align}
The meaning of $g_{i\to j}$ is to eliminate $x_i$ in $f_j$
using $f_i$. For readers familiar with computational algebra, this notation denotes the resultant of $f_i$ and $f_j$ with respect to the variable $x_j$. Notice that $g_{i\to j}\in {\mathbb Q}[\kappa, x_j, x_k]$, and the degree of $g_{i \to j}$ with respect to $x_j$ is at most $2$. So, we can write 
\begin{align}\label{eq:gij}
g_{i \to j}=C^{i\to j}_2 x^2_j + C^{i\to j}_1 x_j +C^{i\to j}_0,
\end{align}
where
\begin{align}
C^{i\to j}_2&=-(a^{(j)}_{i,j}+a^{(j)}_{i,j,k}x_k)(a^{(i)}_j+a^{(i)}_{j,k}x_k)+(a^{(i)}_{i,j}+a^{(i)}_{i,j,k}x_k)(a^{(j)}_j+a^{(j)}_{j,k}x_k), \label{eq:c2}\\
C^{i\to j}_0&=-(a^{(j)}_i+a^{(j)}_{i,k})(a^{(i)}_0+a^{(i)}_{k}x_k)+(a^{(i)}_i+a^{(i)}_{i,k})(a^{(j)}_0+a^{(j)}_{k}x_k).  \label{eq:c0}\\
C^{i\to j}_1 &= (a_{i,j}^{(i)}+a_{i,j,k}^{(i)}x_k)\bigl(a_0^{(j)} + a_k^{(j)}x_k\bigr) 
     - (a_{i,j}^{(j)}+a_{i,j,k}^{(j)}x_k)\bigl(a_0^{(i)} + a_k^{(i)}x_k\bigr)\notag \\
    &\quad + \bigl(a_i^{(i)} + a_{i,k}^{(i)}x_k\bigr)
            \bigl(a_j^{(j)} + a_{j,k}^{(j)}x_k\bigr)
     - \bigl(a_i^{(j)} + a_{i,k}^{(j)}x_k\bigr)
       \bigl(a_j^{(i)} + a_{j,k}^{(i)}x_k\bigr).\label{eq:c1}
\end{align}
\begin{remark}\label{rmk:order}
By the setting presented in \eqref{eq:fi}, we only have the notation $a^{(\ell)}_{i,j}$ for $i<j$ (e.g., $a_{1,2}^{(\ell)}$ for $x_1x_2$, not $a_{2,1}^{(\ell)}$). Since $i$ and $j$ in \eqref{eq:c2}--\eqref{eq:c1} come from an arbitrary permutation $(i,j,k)\in\mathfrak{S}_3$ where $i>j$ may occur, we adopt the convention that $a_{i,j}^{(\ell)}:=a_{j,i}^{(\ell)}$ whenever $i>j$.
\end{remark}
\begin{lemma}\label{lm:csign}
 In ${\mathbb Q}[\kappa, x_k]$,
$C^{i\to j}_2$ is  
a polynomial with all positive terms or the zero polynomial, and $C^{i\to j}_0$ is  
a polynomial with all negative terms or the zero polynomial.
\end{lemma}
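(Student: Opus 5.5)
The plan is to argue directly from the explicit formulas \eqref{eq:c2} and \eqref{eq:c0}, using the sign rule of Lemma~\ref{lm:01}~(1). First I would recall where these formulas come from, to make sure the factors are read correctly. Because the network is zero-one, write $f_i=P_ix_i+Q_i$ and $f_j=P_jx_i+Q_j$, where $P_i,Q_i,P_j,Q_j$ do not involve $x_i$. Then $\partial f_i/\partial x_i=P_i$ and $\partial f_j/\partial x_i=P_j$, so \eqref{eq:ggeneral} gives $g_{i\to j}=P_iQ_j-P_jQ_i$. Expanding in $x_j$ and collecting the $x_j^2$ and $x_j^0$ coefficients reproduces \eqref{eq:c2} and \eqref{eq:c0}. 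In \eqref{eq:c0} the terms $a^{(j)}_{i,k}$ and $a^{(i)}_{i,k}$ should carry the factor $x_k$; this does not affect any sign argument below.

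Next I would record the signs of each coefficient $a^{(\ell)}_*$ via Lemma~\ref{lm:01}~(1). Each such coefficient is either the zero polynomial or $\pm(\kappa_{i_1}+\dots+\kappa_{i_n})$.
\begin{itemize}
\item In $f_i$, the coefficients of monomials containing $x_i$ are nonpositive: $a^{(i)}_i$, $a^{(i)}_{i,j}$, $a^{(i)}_{i,k}$, $a^{(i)}_{i,j,k}$.
\item In $f_i$, the coefficients of monomials without $x_i$ are nonnegative: $a^{(i)}_0$, $a^{(i)}_j$, $a^{(i)}_k$, $a^{(i)}_{j,k}$.
\item In $f_j$, the coefficients containing $x_j$ are nonpositive: $a^{(j)}_{i,j}$, $a^{(j)}_{i,j,k}$, $a^{(j)}_j$, $a^{(j)}_{j,k}$.
\item In $f_j$, the coefficients without $x_j$ are nonnegative: $a^{(j)}_0$, $a^{(j)}_i$, $a^{(j)}_k$, $a^{(j)}_{i,k}$.
\end{itemize}
The ordering convention of Remark~\ref{rmk:order} only relabels these coefficients, so it does not change their signs. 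Consequently every linear factor such as $a^{(i)}_j+a^{(i)}_{j,k}x_k$ is, as an element of $\mathbb{Q}[\kappa,x_k]$, either zero or a polynomial whose terms all have one common sign.

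Then I would check each product.
\begin{itemize}
\item In $C^{i\to j}_2$, the first summand is $-(\text{nonpositive})(\text{nonnegative})$, and the second is $(\text{nonpositive})(\text{nonpositive})$.
\item In $C^{i\to j}_0$, the first summand is $-(\text{nonnegative})(\text{nonnegative})$, and the second is $(\text{nonpositive})(\text{nonnegative})$.
\end{itemize}
A product of two single-sign polynomials with nonnegative coefficients in the variables $\kappa,x_k$ expands into monomials that all carry the product sign. So each summand of $C^{i\to j}_2$ is zero or has only positive terms, and each summand of $C^{i\to j}_0$ is zero or has only negative terms.

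Finally, a sum of polynomials whose terms all have the same sign admits no cancellation. Hence $C^{i\to j}_2$ is zero or has all positive terms, and $C^{i\to j}_0$ is zero or has all negative terms, which is the claim. The only real obstacle is bookkeeping: correctly classifying each coefficient according to whether its monomial contains $x_i$ (for $f_i$) or $x_j$ (for $f_j$). In particular, the mixed coefficients $a^{(i)}_{j,k}$ and $a^{(j)}_{i,k}$ are the ones most easily misclassified.
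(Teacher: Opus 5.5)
Your proposal is correct and matches the paper's proof: you classify the signs of the $a^{(i)}_*$ and $a^{(j)}_*$ by Lemma~\ref{lm:01}~(1), then read off the sign of each product in \eqref{eq:c2} and \eqref{eq:c0}. Your re-derivation from $g_{i\to j}=P_iQ_j-P_jQ_i$ also correctly notes that $a^{(j)}_{i,k}$ and $a^{(i)}_{i,k}$ in \eqref{eq:c0} should carry the factor $x_k$, which does not affect the sign argument.
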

\begin{proof}
Notice that if a monomial does not appear in $f_i$, then the corresponding coefficient $a^{(\ell)}_{*}$ defined in \eqref{eq:fi} is identically zero. Below, we abuse the notation ``$a^{(\ell)}_{*} \leq 0$ (or $\geq 0$)" a little to denote  that $a^{(\ell)}_{*}$ is a polynomial with all negative (positive) terms or the zero polynomial.  
  By Lemma \ref{lm:01} {\it (1)}, we have 
\begin{align*}
a^{(i)}_{i},\;\; a^{(i)}_{i,j},\;\; a^{(i)}_{i,k} ,\;\;a^{(i)}_{i,j,k} &\leq 0,\\
a^{(i)}_0, \;\; a^{(i)}_j,\;\; a^{(i)}_k, \;\; a^{(i)}_{j,k} &\geq 0.
\end{align*}  
By symmetry, we have
\begin{align*}
a^{(j)}_{j},\;\; a^{(j)}_{i,j},\;\; a^{(j)}_{j,k},\;\;a^{(j)}_{i,j,k} &\leq 0,\\
a^{(j)}_0, \;\; a^{(j)}_i,\;\; a^{(j)}_k, \;\; a^{(j)}_{i,k} &\geq 0.
\end{align*}  
So, the conclusion follows from 
\eqref{eq:c2} and \eqref{eq:c0}.
\end{proof}

We denote by ${\mathcal K}(G)$ the subset of ${\mathbb R}_{>0}^m$ such that for any $\kappa \in {\mathcal K}$, the network $G$ has at least one positive steady state. When the context is clear, we simply write ${\mathcal K}(G)$ as ${\mathcal K}$. Notice that $G$ is consistent if and only if ${\mathcal K}(G)\neq \emptyset$. 

\begin{lemma}\label{lm:gsys}
Given a $(3,m,3)$ zero-one network $G$, for any  $(i,j,k) \in \mathfrak{S}_3$,  let  $g_{i\to j}$ and $g_{i\to k}$ be defined as in \eqref{eq:ggeneral}.  
Then, for any $\kappa^*\in {\mathcal K}$,  there is a one-to-one correspondence between  $\{(x_j, x_k)\in {\mathbb R}^2_{>0}|g_{i\to j}(\kappa^*, x_j, x_k)=g_{i\to k}(\kappa^*,x_j,x_k)=0\}$ and
the positive steady states of $G$, and for any positive steady state $x^*$, we have 
\begin{align}\label{eq:jacg}
\det\!\left(\operatorname{Jac}_g(\kappa^*, x_j^*, x_k^*)\right)
&=\frac{\partial f_i}{\partial x_i}(\kappa^*, x^*)\det\!\left(\operatorname{Jac}_f(\kappa^*, x^*)\right).
\end{align}
\end{lemma}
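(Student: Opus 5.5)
The key structural fact is that each $f_\ell$ is affine in $x_\ell$. Write $f_i = p_i x_i + q_i$ and $f_j = p_j x_i + q_j$, where $p_i=\frac{\partial f_i}{\partial x_i}$ and $p_j=\frac{\partial f_j}{\partial x_i}$ do not depend on $x_i$ (zero-one implies every $f_\ell$ is affine in each variable). Then
\[
g_{i\to j}=p_i f_j - p_j f_i = p_i q_j - p_j q_i ,
\]
which is indeed free of $x_i$; likewise $g_{i\to k}=p_i q_k-p_k q_i$. By Lemma \ref{lm:01}(3), since $G$ is consistent and full-dimensional, $p_i$ is a nonzero polynomial with all negative terms. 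Hence $p_i<0$ on the positive orthant for every $\kappa^*\in{\mathcal R}^m_{>0}$, and this nonvanishing is the only input needed.

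For the correspondence, send a positive steady state $x^*$ to $(x_j^*,x_k^*)$. Then $g_{i\to j}$ and $g_{i\to k}$ vanish there, since they lie in the ideal generated by $f_i,f_j,f_k$. Conversely, given a positive common zero $(x_j,x_k)$, set
\[
x_i:=-q_i(\kappa^*,x_j,x_k)/p_i(\kappa^*,x_j,x_k),
\]
which is well defined because $p_i\neq0$; it is the unique $x_i$ with $f_i=0$. Then $p_i f_j = g_{i\to j}+p_jf_i=0$, and dividing by $p_i\ne0$ gives $f_j=0$; similarly $f_k=0$. Positivity of $x_i$ still has to be checked. By Lemma \ref{lm:01}(1), $q_i$ has only nonnegative terms and $p_i$ only negative ones. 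So $x_i=-q_i/p_i\ge0$, with equality only when $q_i(\kappa^*,x_j,x_k)=0$. I expect this boundary case to be the main obstacle. If $q_i\equiv0$, then $f_i=p_ix_i$ has no positive zero, which contradicts $\kappa^*\in\mathcal K$. If $q_i\not\equiv0$, it is a nonzero sum of positive-coefficient monomials in the positive variables $x_j,x_k$ and positive $\kappa^*$, so $q_i>0$ strictly. Thus $x_i>0$, and the two maps are mutually inverse because $x_i$ is uniquely determined.

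For \eqref{eq:jacg}, I would avoid expanding determinants and use a chain-rule factorization instead. Define $\phi(x_j,x_k)=(-q_i/p_i,x_j,x_k)$ and note that $g=p_i\,(f_j,f_k)\circ\phi$ identically, because $f_i\circ\phi\equiv0$. At a common zero, $(f_j,f_k)\circ\phi=0$, so
\[
\operatorname{Jac}_g=p_i\cdot \partial_{(x_j,x_k)}\bigl[(f_j,f_k)\circ\phi\bigr].
\]
Let $\phi_j=\partial x_i/\partial x_j$ and $\phi_k=\partial x_i/\partial x_k$. The chain rule gives
\[
\partial_{(x_j,x_k)}\bigl[(f_j,f_k)\circ\phi\bigr]=\begin{pmatrix}\partial_jf_j+\partial_if_j\phi_j & \partial_kf_j+\partial_if_j\phi_k\\ \partial_jf_k+\partial_if_k\phi_j&\partial_kf_k+\partial_if_k\phi_k\end{pmatrix},
\]
with $\phi_j=-\partial_jf_i/\partial_if_i$ and $\phi_k=-\partial_kf_i/\partial_if_i$, obtained by differentiating $f_i\circ\phi=0$. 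This $2\times2$ matrix is the Schur complement of the pivot $\partial_if_i$ in $\operatorname{Jac}_f$ with rows and columns ordered $(i,j,k)$. So its determinant equals $\det\operatorname{Jac}_f/\partial_if_i$; the simultaneous row and column permutation leaves the determinant unchanged. Multiplying by $p_i^2=(\partial_if_i)^2$, which comes from the scalar $p_i$ on a $2\times 2$ matrix, gives
\[
\det\operatorname{Jac}_g=(\partial_if_i)^2\cdot\det\operatorname{Jac}_f/\partial_if_i=\frac{\partial f_i}{\partial x_i}\det\operatorname{Jac}_f,
\]
which is \eqref{eq:jacg}. I would double-check the convention for the order of variables in $\operatorname{Jac}_g$ as $(x_j,x_k)$ and for the rows $(g_{i\to j},g_{i\to k})$ so that no sign appears.
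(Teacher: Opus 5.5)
Your proof is correct. The correspondence part is the paper's argument: solve $f_i=0$ for $x_i=-q_i/p_i$, then use $p_i<0$ to recover $f_j=f_k=0$. Your explicit handling of $q_i\equiv 0$ through $\kappa^*\in\mathcal K$ spells out what the paper compresses into ``by Lemma~\ref{lm:01} (1) and (3)''.

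For \eqref{eq:jacg} your route differs slightly. The paper differentiates $g_{i\to j}=\frac{\partial f_i}{\partial x_i}f_j-\frac{\partial f_j}{\partial x_i}f_i$ directly at the steady state, where every term carrying a factor $f_i$ or $f_j$ drops out. So $\operatorname{Jac}_g$ becomes the matrix of bordered minors $\det\frac{\partial(f_i,f_r)}{\partial(x_i,x_c)}$, $r,c\in\{j,k\}$. The paper then concludes by Sylvester's determinant identity (Chi\`o condensation), which it uses without stating.

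Your matrix $p_i\cdot(\text{Schur complement})$ is entrywise that same matrix. Your chain-rule factorization $g=p_i\,(f_j,f_k)\circ\phi$ is therefore a self-contained, conceptual derivation of the identity the paper leaves implicit. The cost is that you need $\frac{\partial f_i}{\partial x_i}\neq 0$ to define $\phi$ and to divide by the pivot; this is available here from Lemma~\ref{lm:01} (3). The bordered-minor identity, by contrast, is purely algebraic and holds at any common zero of $f$.

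Your ordering concern is settled: the paper's $\operatorname{Jac}_g$ has rows $g_{i\to j},g_{i\to k}$ and columns $x_j,x_k$, exactly as you assume, so no sign arises.
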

\begin{proof}
Notice that by \eqref{eq:ggeneral},
we have $g_{i\to j}, g_{i\to k}\in {\mathbb Q}[\kappa, x_j, x_k]$, and for any $\kappa^*\in {\mathcal K}$,   if  $x^*=(x^*_i, x^*_j, x^*_k)$ is a positive steady state of $G$, then $(x^*_j, x_k^*)$ must be a solution of $g_{i\to j}=g_{i\to k}=0$. On the other hand, 
suppose $(x^*_j, x_k^*)$ is a positive solution of $g_{i\to j}=g_{i\to k}=0$. Let  $x^*_i=-\frac{q_i(\kappa^*, x^*_j, x^*_k)}{p_i(\kappa^*,  x^*_j, x^*_k)}$, where
we assume that $f_i$ can be written as $f_i=p_ix_i+q_i$. Recall that by Lemma \ref{lm:01} {\it (1)} and {\it (3)}, we have $x^*_i>0$. Then, $x^*:=(x^*_i, x^*_j, x^*_k)$ is a positive solution to $f_i=0$. Notice again that by Lemma \ref{lm:01} {\it (3)}, we have $\frac{\partial f_i}{\partial x_i}(x^*)=p_i(x^*)<0$.  So, by \eqref{eq:ggeneral}, $x^*$ is also a common positive solution to $f_j=f_k=0$. Therefore, $x^*$ is a positive steady state of $G$.

Notice that   the Jacobian matrix $\operatorname{Jac}_g$ is  \begin{align}\notag
\begin{pmatrix}
\dfrac{\partial g_{i\to j}}{\partial x_j} & \dfrac{\partial g_{i\to j} }{\partial x_k } \\
\dfrac{\partial g_{i\to k}}{\partial x_j} & \dfrac{\partial g_{i \to k}}{\partial x_k}
\end{pmatrix}. 
\end{align}
By \eqref{eq:ggeneral}, for any rate-constant vector $\kappa^*\in {\mathcal K}$ and a corresponding positive steady state 
$x^*$,
we have 
\begin{align*}
\operatorname{Jac}_g(\kappa^*, x_j^*, x_k^*)
&=\begin{pmatrix}
\det\dfrac{\partial(f_i,f_j)}{\partial(x_i,x_j)} & \det\dfrac{\partial(f_i,f_j)}{\partial(x_i,x_k)} \\
\det\dfrac{\partial(f_i,f_k)}{\partial(x_i,x_j)} & \det\dfrac{\partial(f_i,f_k)}{\partial(x_i,x_k)}
\end{pmatrix}|_{(\kappa,x)=(\kappa^*, x^*)}. 
\end{align*}
Hence, we have \eqref{eq:jacg} holds. 
\end{proof}

\begin{corollary}\label{cry:csign}
Given a $(3, m, 3)$ zero-one network $G$, 
if there exists $(i,j,k)\in \mathfrak{S}_3$  such that 
$C^{i\to j}_0C^{i\to j}_2$ is the zero polynomial and $C^{i\to j}_1$ is also the zero polynomial, then the network $G$ is inconsistent or degenerate. 
\end{corollary}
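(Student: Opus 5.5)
The plan is to argue by contradiction. Assume $G$ is consistent and nondegenerate, so there is some $\kappa^*\in{\mathcal K}$ with a nondegenerate positive steady state $x^*$. Also assume that for some $(i,j,k)\in\mathfrak{S}_3$ both $C^{i\to j}_0C^{i\to j}_2\equiv 0$ and $C^{i\to j}_1\equiv 0$. The aim is to show that $\det\operatorname{Jac}_f(\kappa^*,x^*)=0$, which contradicts nondegeneracy.

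The key step is to show that $g_{i\to j}$ cannot carry the information of a nondegenerate system. By Lemma \ref{lm:csign}, $C^{i\to j}_2\ge 0$ and $C^{i\to j}_0\le 0$ termwise. If their product is the zero polynomial, then one of them vanishes identically, since $\mathbb{Q}[\kappa,x_k]$ is an integral domain.

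In the case $C^{i\to j}_0\equiv 0$ and $C^{i\to j}_1\equiv 0$, we have $g_{i\to j}=C^{i\to j}_2x_j^2$. By Lemma \ref{lm:gsys}, $(x_j^*,x_k^*)$ is a positive zero of $g_{i\to j}$. Since $x_j^*>0$, this forces $C^{i\to j}_2(\kappa^*,x_k^*)=0$. Because $C^{i\to j}_2$ has only positive terms (or is zero) and $x_k^*>0$, it must be the zero polynomial. The case $C^{i\to j}_2\equiv 0$ is symmetric: $g_{i\to j}=C^{i\to j}_0$, and since it has all negative terms or is zero, positivity of the solution forces $C^{i\to j}_0\equiv 0$. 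In both cases $g_{i\to j}$ is the zero polynomial.

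With $g_{i\to j}\equiv 0$, both partial derivatives $\partial g_{i\to j}/\partial x_j$ and $\partial g_{i\to j}/\partial x_k$ vanish everywhere. So the first row of $\operatorname{Jac}_g$ is zero, and $\det\operatorname{Jac}_g(\kappa^*,x_j^*,x_k^*)=0$. Lemma \ref{lm:gsys} applies to $g_{i\to j}$ together with $g_{i\to k}$, and its formula \eqref{eq:jacg} gives
\[
0=\frac{\partial f_i}{\partial x_i}(\kappa^*,x^*)\,\det\operatorname{Jac}_f(\kappa^*,x^*).
\]
Lemma \ref{lm:01}(3) uses consistency and full dimension to show $\partial f_i/\partial x_i$ is a nonzero polynomial with all negative terms, so it is strictly negative at positive $(\kappa^*,x^*)$. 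Hence $\det\operatorname{Jac}_f(\kappa^*,x^*)=0$, which contradicts nondegeneracy. Since $\kappa^*$ and $x^*$ were arbitrary, $G$ is inconsistent or degenerate.

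The only delicate point is the sign argument that upgrades ``vanishes at one positive point'' to ``is the zero polynomial''. This works because $C^{i\to j}_2$ and $C^{i\to j}_0$ are sign-definite on the positive orthant by Lemma \ref{lm:csign}, and $\kappa^*$ and $x_k^*$ are strictly positive. No other obstacle is expected.
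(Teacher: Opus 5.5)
Your proof is correct and follows the paper's route. You reduce $g_{i\to j}$ to the single sign-definite term $C^{i\to j}_2x_j^2$ or $C^{i\to j}_0$, use Lemma \ref{lm:csign} and Lemma \ref{lm:gsys} to rule out a nonzero such term, and read off degeneracy from \eqref{eq:jacg} when $g_{i\to j}\equiv 0$. Your contradiction framing only makes explicit the $g_{i\to j}\equiv 0$ step (zero first row of $\operatorname{Jac}_g$, plus $\partial f_i/\partial x_i<0$ from Lemma \ref{lm:01}(3)), which the paper's one-line proof leaves to the reader.
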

\begin{proof}
If $C^{i\to j}_0C^{i\to j}_2$ is the zero polynomial and $C^{i\to j}_1$ is also the zero polynomial, then by \eqref{eq:gij}, we have $g_{i \to j}=C^{i\to j}_2x^2_k$ or  $g_{i \to j}=C^{i\to j}_0$. 
Thus, the conclusion directly follows from Lemma \ref{lm:csign} and Lemma \ref{lm:gsys}.
\end{proof}

\begin{definition}[Positive isoline property]\label{def:positive-isoline}
Given a $(3,m,3)$   zero-one network $G$, any $(i,j,k)\in \mathfrak{S}_3$ is said to have the \defword{positive isoline property} if 
$g_{i\to j}=0$ defines a ${\mathcal C}^1$-function $\sigma:\mathcal{K}\times \mathbb{R}_{>0}\to\mathbb{R}_{>0}$ such that
for any $(\kappa,x_k)\in\mathcal{K}\times\mathbb{R}_{>0}$, $x_j=\sigma(\kappa,x_k)$ is the unique 
nondegenerate positive solution of $g_{i\to j}(\kappa,x_j,x_k)=0$. We  say
$\sigma:\mathcal{K}\times\mathbb{R}_{>0}\to\mathbb{R}_{>0}$ is the \defword{positive isoline function according to $(i,j,k)$}. 

 Any $(i,j,k)\in \mathfrak{S}_3$ is said to have the \defword{local positive isoline property} if for any $\kappa^*\in \mathcal{K}$, 
$g_{i\to j}|_{\kappa=\kappa^*}=0$ defines a ${\mathcal C}^1$-function $\sigma^*: \mathcal{I}\to\mathbb{R}_{>0}$, 
where $\mathcal{I}\subset\mathbb{R}_{>0}$ is an open interval, such that
for any $x_k\in \mathcal{I}$, $x_j=\sigma^*(x_k)$ is the unique 
nondegenerate positive solution of $g_{i\to j}(\kappa^*,x_j,x_k)=0$, and for any 
$x_k\in \mathbb{R}_{>0}\setminus\mathcal{I}$, the equation 
$g_{i\to j}(\kappa^*,x_j,x_k)=0$ has no positive solutions. We also say
$\sigma^*:\mathcal{I}\to\mathbb{R}_{>0}$ is the \defword{local positive isoline function according to $(i,j,k)$}.
\end{definition}

\begin{remark}
Notice that if $(i,j,k)\in \mathfrak{S}_3$ has the positive isoline property and $\sigma$ is its positive isoline function, then for 
any $\kappa^*\in \mathcal{K}$, $\sigma|_{\kappa=\kappa^*}:\mathbb{R}_{>0}\to\mathbb{R}_{>0}$ is naturally the local positive isoline function, and so, $(i,j,k)$ also has the local positive isoline property. 

By Definition \ref{def:positive-isoline},
in Example \ref{ex:pip}, $(1,2,3)$ has the positive isoline property, and
the function $\sigma:\mathcal{K}\times\mathbb{R}_{>0}\to\mathbb{R}_{>0}$ derived from $g_{1\to 2}=0$ is the positive isoline function according to $(1,2,3)$. 
\end{remark}

\begin{lemma}\label{lm:hsys}
Given a $(3,m,3)$   zero-one network $G$, 
if there exists $(i,j,k)\in \mathfrak{S}_3$
having the local positive isoline property and for any $\kappa^*\in {\mathcal K}$, $\sigma^*: \mathcal{I}\to\mathbb{R}_{>0}$ is the local positive isoline function according to $(i,j,k)$, 
then we have the following conclusions. 
\begin{enumerate}[(1)]
    \item For any $\kappa^* 
\in {\mathcal K}$, there is a one-to-one correspondence between $\{x^*_k\in {\mathcal I}| h(x^*_k)=0\}$ and the  positive steady states of $G$, where $h(x_k):=g_{i\to k}(\kappa^*, \sigma^*(x_k), x_k)$, and  for any positive steady state $x^*$, we have 
\begin{align}\label{eq:hjacf}
  {\rm sign}\;h'(x_k^*)
=-{\rm sign}\det\!\left(\operatorname{Jac}_f(\kappa^*, x^*)\right).
\end{align}
\item If  for $\kappa^*\in {\mathcal K}$, $G$ has exactly $N$ $(N\geq 2)$ nondegenerate positive steady states $x^{(1)}, \ldots, x^{(N)}$, where  these steady states are ordered according to their $k$-th coordinates (i.e., $x_k^{(1)}<\ldots<x_k^{(N)}$),  then
for each $\ell \in \{1, \ldots, N-1\}$,  we have 
\begin{align*}
\det\!\left(\operatorname{Jac}_f(\kappa^*, x^{(\ell)})\right)\det\!\left(\operatorname{Jac}_f(\kappa^*, x^{(\ell+1)})\right)<0.
\end{align*}
\item 
If for $\kappa^*\in {\mathcal K}$, $G$ has exactly $N$ ($N\geq 2$) nondegenerate positive steady states, then  
the number of stable positive steady states is 
\begin{align*}
\begin{cases}
\frac{N}{2}, & N\; \text{is even}\\
\frac{N+1}{2}\;
\text{or}\;\frac{N-1}{2},&  N\; \text{is odd}
\end{cases}.
\end{align*}
\end{enumerate}
\end{lemma}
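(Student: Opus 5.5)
For part (1), fix $\kappa^*\in\mathcal K$ and the local positive isoline function $\sigma^*:\mathcal I\to\mathbb R_{>0}$. I will combine Lemma~\ref{lm:gsys} with the defining property of $\sigma^*$. By Lemma~\ref{lm:gsys}, positive steady states correspond bijectively to positive common zeros $(x_j,x_k)$ of $g_{i\to j}$ and $g_{i\to k}$. For any such zero, $g_{i\to j}(\kappa^*,x_j,x_k)=0$ has a positive solution $x_j$. By Definition~\ref{def:positive-isoline}, this forces $x_k\in\mathcal I$ and $x_j=\sigma^*(x_k)$, since the positive solution is unique there. Hence $h(x_k)=0$. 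Conversely, every zero $x_k\in\mathcal I$ of $h$ gives the common positive zero $(\sigma^*(x_k),x_k)$. Together these give the bijection.

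For the sign identity \eqref{eq:hjacf}, I differentiate $g_{i\to j}(\kappa^*,\sigma^*(x_k),x_k)\equiv 0$ on $\mathcal I$. Nondegeneracy gives $\partial_{x_j}g_{i\to j}\neq0$, so
\[
\sigma^{*\prime}=-\frac{\partial_{x_k}g_{i\to j}}{\partial_{x_j}g_{i\to j}}.
\]
Then
\[
h'=\partial_{x_j}g_{i\to k}\,\sigma^{*\prime}+\partial_{x_k}g_{i\to k}=\frac{\det\operatorname{Jac}_g}{\partial_{x_j}g_{i\to j}}.
\]
Lemma~\ref{lm:gsys} gives $\det\operatorname{Jac}_g=\frac{\partial f_i}{\partial x_i}\det\operatorname{Jac}_f$, and $\frac{\partial f_i}{\partial x_i}<0$ at positive points by Lemma~\ref{lm:01}(3). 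So it remains to show $\partial_{x_j}g_{i\to j}>0$ at the root $\sigma^*(x_k)$.

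This positivity is the main obstacle. My plan is a continuity argument in $x_k$. The map $x_k\mapsto \partial_{x_j}g_{i\to j}(\kappa^*,\sigma^*(x_k),x_k)$ is continuous and nonvanishing on the connected interval $\mathcal I$, so it has constant sign, and it suffices to find that sign at one point. Because $\sigma^*(x_k)$ is the unique positive root of a quadratic (or linear) polynomial in $x_j$, that derivative has the sign of $g_{i\to j}$ just to the right of the root, i.e.\ the sign as $x_j\to+\infty$. I expect to read this off from Lemma~\ref{lm:csign}:
\begin{itemize}
\item If $C_2^{i\to j}\neq0$, then $C_2^{i\to j}>0$, so $g_{i\to j}\to+\infty$ as $x_j\to+\infty$.
\item If $C_2^{i\to j}=0$, then $g_{i\to j}$ is linear in $x_j$ with $C_0^{i\to j}\le 0$. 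A positive root then requires either $C_1^{i\to j}>0$, or $C_0^{i\to j}=0$ with a root at $x_j=0$, which is not positive and so is excluded.
\end{itemize}
In every case the derivative is $>0$ at the unique simple positive root, since a simple root where $g_{i\to j}$ goes from negative at $0^+$ to positive at $+\infty$ must be upward-crossing. If the two-sided sign check at $x_j=0$ is delicate (for instance when $C_0^{i\to j}=0$), I would instead argue directly: $g_{i\to j}$ is eventually positive as $x_j\to\infty$, and the root is unique and simple, so $g_{i\to j}>0$ on $(\sigma^*,\infty)$, which forces $\partial_{x_j}g_{i\to j}\ge0$, hence $>0$. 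Combining, $\operatorname{sign} h'=-\operatorname{sign}\det\operatorname{Jac}_f$.

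For part (2), the states $x^{(\ell)}$ correspond to zeros $x^{(\ell)}_k\in\mathcal I$ of $h$. These are distinct, since $x_j=\sigma^*(x_k)$ and $x_i$ are determined by $x_k$, so the ordering by $x_k$ is strict. Each zero is nondegenerate by \eqref{eq:hjacf}. The function $h$ is $\mathcal C^1$ on the open interval $\mathcal I$, so Lemma~\ref{lem:sign-alternation} gives $h'(x_k^{(\ell)})h'(x_k^{(\ell+1)})<0$, and \eqref{eq:hjacf} transfers this alternation to the determinants of $\operatorname{Jac}_f$.

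For part (3), Lemma~\ref{lm:stab} says a state is stable exactly when $\det\operatorname{Jac}_f<0$. By part (2) the signs alternate along the ordering, so the stable states are exactly the odd-indexed or exactly the even-indexed ones. Counting gives $N/2$ when $N$ is even, and $(N+1)/2$ or $(N-1)/2$ when $N$ is odd.
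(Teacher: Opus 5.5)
Your proof is correct. Its skeleton matches the paper's: the bijection via Lemma~\ref{lm:gsys} and the uniqueness clause of Definition~\ref{def:positive-isoline}, implicit differentiation giving $h'=\det\operatorname{Jac}_g/\partial_{x_j}g_{i\to j}$, the factorization $\det\operatorname{Jac}_g=\frac{\partial f_i}{\partial x_i}\det\operatorname{Jac}_f$, and then Lemma~\ref{lem:sign-alternation} and Lemma~\ref{lm:stab}.

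\textbf{Where you diverge.} The one genuine difference is how you obtain $\partial_{x_j}g_{i\to j}>0$ at $\bigl(\sigma^*(x_k),x_k\bigr)$.
\begin{itemize}
\item The paper notes that at a steady state $\partial g_{i\to j}/\partial x_j=\det\frac{\partial(f_i,f_j)}{\partial(x_i,x_j)}$. It imports the nonnegativity of this $2\times 2$ principal minor for zero-one networks from \cite[Lemma 6.2 (i)]{tang2023hopf}. It then upgrades to strict positivity using nondegeneracy of $\sigma^*(x_k)$.
\item You read the sign directly off $g_{i\to j}$ as a polynomial in $x_j$. Lemma~\ref{lm:csign} gives $C_2^{i\to j}\ge 0$ and $C_0^{i\to j}\le 0$ pointwise. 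Uniqueness and simplicity of the positive root then force an upward crossing.
\end{itemize}
Your route stays inside the paper and never needs to identify $\partial_{x_j}g_{i\to j}$ with the Jacobian minor. In effect it reproves the needed minor inequality at these steady states, using only the sign structure already established. The paper's route is shorter on the page, but it rests on an external lemma whose hypotheses the reader must check.

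\textbf{Two small remarks.}
\begin{itemize}
\item The continuity-in-$x_k$ step is superfluous, since your case analysis already fixes the sign at every $x_k\in\mathcal I$.
\item In the linear case with $C_0^{i\to j}=0$, the subcase $C_1^{i\to j}=0$ makes $g_{i\to j}(\kappa^*,\cdot,x_k)\equiv 0$. That subcase is ruled out by nondegeneracy of $\sigma^*(x_k)$, not by the root sitting at $x_j=0$. State this explicitly.
\end{itemize}
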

\begin{proof}
{\it (1)} For any $\kappa^*\in {\mathcal K}$, by the definition of $h$, we know that for any positive solution $(x^*_j, x^*_k)$ of $g_{i\to j}=g_{i\to k}=0$, $x^*_k$ must be a solution of $h(x_k)=0$ in ${\mathcal I}$. On the other hand, 
suppose $x^*_k$ is a positive solution of $h(x_k)=0$ in ${\mathcal I}$. Let $x^*_j=\sigma^*(x^*_k)$. Then, $(x^*_j, x^*_k)$ is a positive solution of $g_{i\to j}=g_{i\to k}=0$.  So, by Lemma 
\ref{lm:gsys}, there is a one-to-one correspondence of $\{x^*_k\in {\mathcal I}| h(x_k^*)=0\}$ and the  positive steady states of $G$. 

Notice that $h(x_k):=g_{i\to k}(\kappa^*, \sigma^*(x_k), x_k)$.
Then, for any steady-state point $(\kappa^*, x^*)$, 
\begin{align}\label{eq:jach}
  h'(x_k^*)
=\frac{\det\!\left(\operatorname{Jac}_g(\kappa^*, x_j^*, x_k^*)\right)}{\frac{\partial g_{i\to j}}{\partial x_j}(\kappa^*, x_j^*, x_k^*)}
\end{align}
Notice that by \eqref{eq:ggeneral},
\begin{align}\label{eq:particalgijxj}
\frac{\partial g_{i\to j}}{\partial x_j} = \det\dfrac{\partial(f_i,f_j)}{\partial(x_i,x_j)}.
\end{align}
So, by  \eqref{eq:jacg} and \eqref{eq:jach}, we have 
\begin{align}\label{eq:nice}
  h'(x_k^*)
=\frac{\frac{\partial f_i}{\partial x_i}\det\!\left(\operatorname{Jac}_f\right)}{\det\dfrac{\partial(f_i,f_j)}{\partial(x_i,x_j)}}(\kappa^*, x^*).
\end{align} 
Since the network is zero-one,
by \cite[Lemma 6.2 (i)]{tang2023hopf},
${\det\dfrac{\partial(f_i,f_j)}{\partial(x_i,x_j)}}(\kappa^*, x^*)\geq 0$. 
However, by \eqref{eq:particalgijxj}, 
if ${\det\dfrac{\partial(f_i,f_j)}{\partial(x_i,x_j)}}(\kappa^*, x^*)=0$, then 
$\frac{\partial g_{i\to j}}{\partial x_j}(\kappa^*, x_j^*, x_k^*)=0$, which contradicts the fact that  $\sigma^*(x_k^*)$ is the unique nondegenerate positive solution of $g_2(\kappa^*, x_j, x^*_k)=0$ and $x_j^*=\sigma^*(x_k^*)$. So, we have ${\det\dfrac{\partial(f_i,f_j)}{\partial(x_i,x_j)}}(\kappa^*, x^*)>0$. Also, by Lemma \ref{lm:01} {\it (3)}, we have 
$\frac{\partial f_i}{\partial x_i}(\kappa^*, x^*)<0$. Thus, by \eqref{eq:nice}, we have \eqref{eq:hjacf}.

{\it (2)} If for $\kappa^*\in {\mathcal K}$,  $G$ has exactly $N$ nondegenerate positive steady states $x^{(1)}, \ldots, x^{(N)}$, then by {\it (1)}, 
the set   
$\{x^*_k\in {\mathcal I}| h(x_k^*)=0\}$ contains exactly 
$\{x_k^{(1)},\ldots, x_k^{(N)}\}$. 
Notice that 
by \eqref{eq:hjacf}, we $ h'(x_k^{(\ell)})\neq 0$. 
Then, by Lemma \ref{lem:sign-alternation},  
for $\ell\in \{1, \ldots, N-1\}$,  we have $h'(x_k^{(\ell)})h'(x_k^{(\ell+1)})<0$. Thus, the conclusion follows from 
\eqref{eq:hjacf}.

{\it (3)}  The conclusion follows from 
 {\it (2)} and Lemma \ref{lm:stab}. 
\end{proof}

\begin{lemma}\label{lm:c0c2nonzero}
 Given a $(3, m, 3)$  zero-one network $G$, 
if there exists  $(i,j,k)\in \mathfrak{S}_3$, such that 
$C^{i\to j}_0C^{i\to j}_2$ is not the zero polynomial, then $(i,j,k)$ has the positive isoline property.
\end{lemma}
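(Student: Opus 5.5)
The plan is to exploit the sign pattern of Lemma~\ref{lm:csign}. Both $C^{i\to j}_2$ and $C^{i\to j}_0$ lie in $\mathbb{Q}[\kappa,x_k]$. Lemma~\ref{lm:csign} says $C^{i\to j}_2$ has only positive terms (or is zero) and $C^{i\to j}_0$ has only negative terms (or is zero). If the product $C^{i\to j}_0C^{i\to j}_2$ is not the zero polynomial, then neither factor is zero. A nonzero polynomial all of whose coefficients are positive is strictly positive on the open positive orthant, so for every $(\kappa,x_k)\in\mathbb{R}^m_{>0}\times\mathbb{R}_{>0}$ we get
\[
C^{i\to j}_2(\kappa,x_k)>0,\qquad C^{i\to j}_0(\kappa,x_k)<0 .
\]
This holds on all of $\mathcal{K}\times\mathbb{R}_{>0}$, not merely on a subset.

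Next, fix $(\kappa,x_k)$ and regard $g_{i\to j}$ in \eqref{eq:gij} as a genuine quadratic in $x_j$. Its leading coefficient is positive and its constant term is negative. The discriminant $(C^{i\to j}_1)^2-4C^{i\to j}_2C^{i\to j}_0$ is therefore strictly positive, and the two real roots have product $C^{i\to j}_0/C^{i\to j}_2<0$. So exactly one root is positive, and it is given by
\[
\sigma(\kappa,x_k)=\frac{-C^{i\to j}_1+\sqrt{(C^{i\to j}_1)^2-4C^{i\to j}_2C^{i\to j}_0}}{2C^{i\to j}_2}.
\]
This root is simple, since the discriminant is nonzero. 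Equivalently, $\partial g_{i\to j}/\partial x_j = 2C^{i\to j}_2\sigma + C^{i\to j}_1=\sqrt{\Delta}>0$ at $x_j=\sigma$, so the solution is nondegenerate.

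For regularity, $\sigma$ is a composition of polynomials in $(\kappa,x_k)$, a square root of a strictly positive function, and division by the strictly positive $C^{i\to j}_2$. Hence $\sigma$ is $\mathcal{C}^1$, indeed real analytic, on $\mathbb{R}^m_{>0}\times\mathbb{R}_{>0}$, and in particular on $\mathcal{K}\times\mathbb{R}_{>0}$. Alternatively, one can invoke the implicit function theorem, using $\partial g_{i\to j}/\partial x_j\neq 0$. Together with uniqueness and positivity, this gives exactly the conditions of Definition~\ref{def:positive-isoline}.

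There is essentially no serious obstacle. The one point to state carefully is the passage from ``nonzero polynomial with all positive (negative) terms'' to strict positivity (negativity) at every positive point. This is immediate because each monomial in $\kappa,x_k$ is positive there. One should also note that, unlike Corollary~\ref{cry:csign}, no consistency or nondegeneracy hypothesis is needed here.
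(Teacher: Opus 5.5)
Your proposal is correct and follows essentially the same route as the paper. Lemma~\ref{lm:csign} forces $C^{i\to j}_2>0$ and $C^{i\to j}_0<0$ at every positive point, so $g_{i\to j}$, as a quadratic in $x_j$, has two simple real roots of opposite sign, and the positive one, given by the quadratic formula, is the $\mathcal{C}^1$ positive isoline function. You add a few details the paper leaves implicit: you pick out the $+$ root explicitly, you check that $\partial g_{i\to j}/\partial x_j=\sqrt{(C^{i\to j}_1)^2-4C^{i\to j}_0C^{i\to j}_2}>0$ at that root, and you note that $\sigma$ is defined and smooth on all of $\mathbb{R}^m_{>0}\times\mathbb{R}_{>0}$. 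Along the way you also correct the typos in the paper's displayed formula.
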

\begin{proof}
Recalling \eqref{eq:gij}, we have
$$g_{i\to j}=C^{i\to j}_2x^j_2+C^{i\to j}_1x_j+C^{i\to j}_0,$$
where $C^{i\to j}_{\ell}\in {\mathbb Q}[\kappa, x_k]$. 
By Lemma \ref{lm:csign}, 
if $C^{i\to j}_0C^{i\to j}_2$ is not the zero polynomial, then 
for any $(\kappa, x_k)\in {\mathbb R}^{m+1}_{>0}$, we have $C^{i\to j}_0C^{i\to j}_2<0$. 
Hence, we can always solve $x_j$ from the equation $g_{i\to j}(\kappa, x_j, x_k)=0$ and get two nondegenerate real solutions of opposite signs 
\begin{align}\label{eq:pfun}
   x_j\;=\; \frac{-C^{i\to j}_i\pm \sqrt{(C^{i\to j}_i)^{2}-4C^{i\to j}_0C^{i\to j}_2}}{2C^{i\to j}_2}. 
\end{align}
Denote by $\sigma(\kappa, x_k)$ the positive solution in  \eqref{eq:pfun}, i.e., a positive ${\mathcal C}^1$-function uniquely determined by 
$g_{i\to j}(\kappa, x_j, x_k)=0$. Then, 
$\sigma:\mathcal{K}\times\mathbb{R}_{>0}\to\mathbb{R}_{>0}$ is the positive isoline function according to $(i,j,k)$. 
\end{proof}

\begin{remark}\label{rmk:czero} 
Notice that 
by Lemma \ref{lm:01} {\it (1)} and by \eqref{eq:c2},  
$C^{i\to j}_2$ is  
the zero polynomial if and only if  one of the following  conditions holds.
\begin{align}
&a^{(j)}_j=a^{(j)}_{j,k}=a^{(j)}_{i,j}=a^{(j)}_{i,j,k} \equiv 0\label{eq:c204} \\
&a^{(i)}_j=a^{(i)}_{j,k}=a^{(i)}_{i,j}=a^{(i)}_{i,j,k} \equiv 0\label{eq:c203} \\
&a^{(i)}_j=a^{(i)}_{j,k}=a^{(j)}_j=a^{(j)}_{j,k}\equiv 0 \label{eq:c202} \\
&a^{(i)}_{i,j}=a^{(i)}_{i,j,k}=a^{(j)}_{i,j}=a^{(j)}_{i,j,k}\equiv 0  \label{eq:c201}
\end{align}
By Lemma \ref{lm:01} {\it (1)} and by \eqref{eq:c0}, $C^{i\to j}_0$ is  
 the zero polynomial if and only if one of the following  conditions holds.

 \begin{align}\label{eq:c004}
a^{(i)}_0=a^{(i)}_{k}=a^{(j)}_0=a^{(j)}_{k}\equiv 0
\end{align}

\begin{align}\label{eq:c001}
a^{(i)}_i=a^{(i)}_{i,k}=a^{(j)}_i=a^{(j)}_{i,k}\equiv 0
\end{align} 

\begin{align}\label{eq:c003}
a^{(j)}_i=a^{(j)}_{i,k}=a^{(j)}_0=a^{(j)}_{k}\equiv 0
\end{align}

\begin{align}\label{eq:c002}
a^{(i)}_i=a^{(i)}_{i,k}=a^{(i)}_0=a^{(i)}_{k}\equiv 0
\end{align}

\end{remark}

\begin{lemma}\label{lm:c0c2zero}
Given a $(3,m,3)$ quadratic  zero-one network $G$, 
suppose it is consistent and nondegenerate. 
Then,  we have the following statements. 
\begin{enumerate}[(1)]
\item The condition  \eqref{eq:c204} cannot hold for any $(i,j,k)\in \mathfrak{S}_3$. 
\item If the condition \eqref{eq:c203} holds for some $(i,j,k)\in \mathfrak{S}_3$, then $(i,j,k)$ has the positive isoline property. 
\item The condition  \eqref{eq:c003} cannot hold for any $(i,j,k)\in \mathfrak{S}_3$.
\item If the condition \eqref{eq:c002} holds for some $(i,j,k)\in \mathfrak{S}_3$, then $(i,j,k)$ has the positive isoline property . 

\item If in the conditions \eqref{eq:c202}--\eqref{eq:c001}, only one of  \eqref{eq:c202} and \eqref{eq:c001} holds for some $(i,j,k)\in \mathfrak{S}_3$, then $(i,j,k)$ has  the local positive isoline property. 
\item For any $(\hat{i},\hat{j},\hat{k})\in \mathfrak{S}_3$, 
 if both conditions    \eqref{eq:c202} and \eqref{eq:c001} hold for $(i,j,k)=(\hat{i},\hat{j},\hat{k})$, then
 the conditions    \eqref{eq:c202} and \eqref{eq:c001} cannot hold simultaneously for $(i,j,k)=(\hat{i},\hat{k},\hat{j})$.
 \item If there exists  $(\hat{i},\hat{j},\hat{k})\in \mathfrak{S}_3$ such that both conditions    \eqref{eq:c201} and \eqref{eq:c004} hold for $(i,j,k)=(\hat{i},\hat{j},\hat{k})$ and both
  conditions    \eqref{eq:c202} and \eqref{eq:c001}  hold for $(i,j,k)=(\hat{i},\hat{k},\hat{j})$, then
  the network $G$ admits at most one positive steady state. 
  \item  If there exists  $(\hat{i},\hat{j},\hat{k})\in \mathfrak{S}_3$ such that in the conditions \eqref{eq:c202}--\eqref{eq:c001}, 
 only one of     \eqref{eq:c201} and \eqref{eq:c004} holds for $(i,j,k)=(\hat{i},\hat{j},\hat{k})$ and both
  conditions    \eqref{eq:c202} and \eqref{eq:c001}  hold for $(i,j,k)=(\hat{i},\hat{k},\hat{j})$, then $(\hat{i},\hat{j},\hat{k})$ has the local positive isoline property. 
  \item If for all  $(i, j, k)\in \mathfrak{S}_3$, one of     \eqref{eq:c201} and \eqref{eq:c004} holds, then either   $G$ admits at most one positive steady state, or certain $(\hat{i},\hat{j}, \hat{k})\in \mathfrak{S}_3$ has the local positive isoline property.

   \item If for all  $(i, j, k)\in \mathfrak{S}_3$, one of     \eqref{eq:c202}, \eqref{eq:c201}, \eqref{eq:c004}, \eqref{eq:c001} holds, then either   $G$ admits at most one positive steady state, or certain $(\hat{i},\hat{j}, \hat{k})\in \mathfrak{S}_3$ has the local positive isoline property. 

\end{enumerate}
\end{lemma}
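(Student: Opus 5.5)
The plan is to treat the ten items separately. Throughout, each vanishing condition is translated into a structural statement about which monomials occur in $f_i,f_j$. Every item then reduces either to a contradiction with consistency, nondegeneracy, full-dimensionality or the quadratic hypothesis, or to an explicit analysis of the univariate quadratic $g_{i\to j}$ in $x_j$.

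For items (1) and (3), condition \eqref{eq:c204} says that $f_j$ contains no monomial involving $x_j$. By Lemma \ref{lm:01} {\it (3)}, $\partial f_j/\partial x_j$ is then the zero polynomial, which is impossible for a consistent full-dimensional network. Condition \eqref{eq:c003} says that $f_j$ has no monomial free of $x_j$ among $1,x_k,x_i,x_ix_k$. After using the quadratic restriction to discard $x_ix_jx_k$, every term of $f_j$ contains $x_j$ and is negative, so $f_j=x_j\,p_j$ with $p_j$ having all negative terms. This gives no positive zero, contradicting consistency.

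For items (2) and (4), condition \eqref{eq:c203} (respectively \eqref{eq:c002}) means that $f_i$ does not involve $x_j$ (respectively that $f_i$ has no $x_i$-free terms other than those in $x_j$). In case (2), $C_2\equiv0$ but $C_0\not\equiv 0$ generically, and $g_{i\to j}$ becomes linear in $x_j$. I plan to show from the sign pattern (Lemma \ref{lm:01} {\it (1)}) that $C_1^{i\to j}>0$ and $C_0^{i\to j}<0$ on $\mathbb{R}_{>0}$. The term $a^{(i)}_i a^{(j)}_j$ of $C_1$ is a product of two negatives, and the cross term vanishes because $a^{(i)}_j\equiv0$. This gives a unique nondegenerate positive root $x_j=-C_0/C_1$, a $\mathcal C^1$ function. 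Case (4) is symmetric, with $C_0\equiv 0$ and $g=x_j(C_2x_j+C_1)$. Here I expect $C_1<0$ and $C_2>0$ (the nonzero status again follows from (1) and (3) together with consistency), giving the unique positive root $-C_1/C_2$.

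Items (5) and (8) are the genuinely local cases. When only $C_2\equiv0$ or only $C_0\equiv0$ holds, $g_{i\to j}$ is linear in $x_j$ (respectively has the form $x_j(C_2x_j+C_1)$), but the sign of $C_1$, which is linear or quadratic in $x_k$, is no longer fixed. I will show that under the stated hypotheses $C_1$, as a function of $x_k>0$, changes sign at most once. I would argue this using the quadratic restriction, which kills $a_{i,j,k}$, together with the vanishing pattern. The positive-root region $\mathcal I=\{x_k>0: C_1(\kappa^*,x_k)\text{ has the right sign}\}$ is then an open interval, and the root is simple there and absent outside. I expect this sign-change claim to be the main obstacle. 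If monotonicity fails, I would try to show instead that $C_1$ has the required sign on a single connected component containing all steady states.

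Items (6), (7), (9) and (10) are combinatorial. For (6), having both \eqref{eq:c202} and \eqref{eq:c001} for $(\hat i,\hat j,\hat k)$ and for $(\hat i,\hat k,\hat j)$ would force $f_{\hat i}$ to contain only $x_{\hat i}$-free terms in $1$. It would also force $f_{\hat j},f_{\hat k}$ to lack diagonal terms, contradicting Lemma \ref{lm:01} {\it (3)}. For (7), the combined vanishing makes $f$ decouple into a triangular system with monotone equations, which yields uniqueness. For (9) and (10), I will enumerate the permutations using (1)--(8) and reduce to one of the previous cases: either some triple falls into (2), (4), (5) or (8), or all triples fall into the configuration of (7).
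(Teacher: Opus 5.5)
There is a genuine gap: your reduction of (9), and hence (10), to earlier items fails, and the key step of (5) and (8) is left unproved. Items (1)--(4) and (6) match the paper in substance. Your direct argument for (3) is simpler than the paper's route through $C^{i\to j}_1>0$: there $f_j=x_jp_j$ with $p_j$ having only negative terms, so $f_j<0$ on $\mathbb{R}^3_{>0}$. In (6) only $f_{\hat i}\equiv a^{(\hat i)}_0$ is needed. Your extra claim that $f_{\hat j},f_{\hat k}$ lose their diagonal terms is false, since $a^{(\hat j)}_{\hat i,\hat j}$ and $a^{(\hat k)}_{\hat i,\hat k}$ are not forced to vanish.

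\textbf{Item (9).} You cannot reduce (9) to earlier items, because its hypothesis concerns only \eqref{eq:c201} and \eqref{eq:c004}. These do not place any triple under (2), (4), (5), (7) or (8). For example, take the network with reactions $0\to X_\ell$, $X_\ell\to 0$, $X_j+X_k\to X_1+X_2+X_3$ for $\{\ell,j,k\}=\{1,2,3\}$. It is consistent and nondegenerate when the bimolecular rate constants are small. Each $f_\ell$ is a constant, minus a multiple of $x_\ell$, plus a multiple of $x_jx_k$. So \eqref{eq:c201} holds for every triple, and none of \eqref{eq:c203}, \eqref{eq:c002}, \eqref{eq:c202}, \eqref{eq:c001}, \eqref{eq:c004} holds for any triple. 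Here $C^{1\to 2}_1$ is a positive constant minus a positive multiple of $x_3^2$. The paper handles (9) directly. Since \eqref{eq:c201} and \eqref{eq:c004} are symmetric in $i,j$, the hypothesis amounts to choosing one of them for each pair $\{1,2\},\{1,3\},\{2,3\}$. In every combination the part $a^{(1)}_{1,2}(a^{(2)}_0+a^{(2)}_3x_3)-a^{(2)}_{1,2}(a^{(1)}_0+a^{(1)}_3x_3)$ of $C^{1\to 2}_1$ vanishes. The remaining quadratic in $x_3$ has coefficient signs (for $x_3^2,x_3,1$) of type $(-,-,?)$, $(?,+,+)$ or $(-,?,+)$. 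Each has at most one sign change, so by Descartes' rule there is at most one positive root. The same holds for $C^{1\to 3}_1$ in $x_2$, and one then argues as in (8).

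\textbf{Items (5) and (8).} You correctly locate the crux but do not resolve it. Your fallback would not suffice, because the local positive isoline property requires that $g_{i\to j}=0$ have no positive solution at all for $x_k\notin\mathcal{I}$. The missing observation for (5) is to split \eqref{eq:c1} as $C^{i\to j}_1=A+B$, where
\[
A=a^{(i)}_{i,j}(a^{(j)}_0+a^{(j)}_kx_k)-a^{(j)}_{i,j}(a^{(i)}_0+a^{(i)}_kx_k),
\]
\[
B=(a^{(i)}_i+a^{(i)}_{i,k}x_k)(a^{(j)}_j+a^{(j)}_{j,k}x_k)-(a^{(j)}_i+a^{(j)}_{i,k}x_k)(a^{(i)}_j+a^{(i)}_{j,k}x_k).
\]
Here $A$ is affine in $x_k$ because the quadratic hypothesis gives $a^{(\ell)}_{i,j,k}\equiv 0$. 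Each of \eqref{eq:c202} and \eqref{eq:c001} kills a factor in both products of $B$, so $C^{i\to j}_1=A$. For fixed $\kappa$ it therefore has at most one zero on $\mathbb{R}_{>0}$, unless it vanishes identically, in which case $\kappa\notin\mathcal{K}$. Hence the set where it has the sign needed for a positive root is an open interval. That sign is $C^{i\to j}_1>0$ if only $C^{i\to j}_2\equiv 0$, and $C^{i\to j}_1<0$ if only $C^{i\to j}_0\equiv 0$.

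In (8) the affine structure is lost, since $C^{1\to 2}_1$ becomes the quadratic \eqref{eq:c1211}. The argument is saved because its leading coefficient $a^{(1)}_{1,3}a^{(2)}_{2,3}$ is $\geq 0$ and its constant term $-a^{(2)}_1a^{(1)}_2$ is $\leq 0$. Descartes' rule then again gives at most one positive root counted with multiplicity.

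\textbf{Item (7).} The same sign check is what your triangular elimination needs. The system is indeed triangular, but the equation left for $x_3$ is, up to a nonzero factor, the quadratic \eqref{eq:c1211}. This is not monotone in general.
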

\begin{proof}
Notice that $G$ is quadratic. So, by \eqref{eq:fi},
for each $\ell\in \{1,2,3\}$, we have $a^{(\ell)}_{i,j,k}\equiv 0$ in $f_{\ell}$. 

{\it (1)} If \eqref{eq:c204} holds for some $(i,j,k)\in \mathfrak{S}_3$, then $f_j$ contains no variable $x_j$. By Lemma \ref{lm:01} {\it (1)}, $f_j$ is a polynomial with all positive terms or the zero polynomial, which contradicts the hypothesis that $G$ is consistent and nondegenerate. 

{\it (2)} If \eqref{eq:c203} holds for some $(i,j,k)\in \mathfrak{S}_3$, then by Remark \ref{rmk:czero},  $C^{i\to j}_2\equiv 0$,  
and so, $g_{i\to j}=C^{i\to j}_1x_j+C^{i\to j}_0$.
By \eqref{eq:c1}, we have 
\begin{align}\label{eq:c1in2}
C^{i\to j}_1 = 
     - a_{i,j}^{(j)}\bigl(a_0^{(i)} + a_k^{(i)}x_k\bigr)+ \bigl(a_i^{(i)} + a_{i,k}^{(i)}x_k\bigr)
            \bigl(a_j^{(j)} + a_{j,k}^{(j)}x_k\bigr).
\end{align}
       By Lemma \ref{lm:01} {\it (1)},
       we have $C^{i\to j}_1\geq 0$.  By Corollary \ref {cry:csign}, $C^{i\to j}_1$ cannot be the zero polynomial. By Lemma \ref{lm:csign}, we have $C^{i\to j}_0\leq 0$. Since $G$ is consistent, we know that $C^{i\to j}_0$ cannot be the zero polynomial. 
       So,  
       for any $(\kappa, x_k)\in {\mathbb R}^{m+1}_{>0}$, the equation   $g_{i\to j}(\kappa, x_j, x_k)=0$ always has exactly one nondegenerate positive solution for $x_j$: 
\begin{align}\label{eq:pfuns203}
   x_j\;=\;- \frac{C^{i\to j}_0}{C^{i\to j}_1}. 
\end{align}
Denote by $\sigma(\kappa, x_k)$ the solution in  \eqref{eq:pfuns203}. Then, $(i,j,k)$ has the positive isoline property and $\sigma:\mathcal{K}\times\mathbb{R}_{>0}\to\mathbb{R}_{>0}$ is the corresponding positive isoline function.

{\it (3)} If \eqref{eq:c003} holds for some $(i,j,k)\in \mathfrak{S}_3$, 
then by Remark \ref{rmk:czero}, $C^{i\to j}_0 \equiv0$,  
and so, we have $g_{i\to j}=x_j(C^{i\to j}_2x_j+C^{i\to j}_1)$.
By \eqref{eq:c1},   $C^{i\to j}_1$ is also given by 
\eqref{eq:c1in2}.
 So, by Lemma \ref{lm:01} {\it (1)} and Corollary \ref{cry:csign},
       we have $C^{i\to j}_1>0$. Note that by Lemma \ref{lm:csign}, we have $C^{i\to j}_2\geq 0$.
       Then, we have $g_{i\to j}>0$, and hence, by Lemma \ref{lm:gsys},
       ${\mathcal K}=\emptyset$, which contradicts to that $G$ is consistent. 

{\it (4)} If \eqref{eq:c002} holds for some $(i,j,k)\in \mathfrak{S}_3$, then $C^{i\to j}_0\equiv 0$ 
and $g_{i\to j}=x_j(C^{i\to j}_2x_j+C^{i\to j}_1)$.
By \eqref{eq:c1}, we have 
$$C^{i\to j}_1 = a_{i,j}^{(i)}\bigl(a_0^{(j)} + a_k^{(j)}x_k\bigr)- \bigl(a_i^{(j)} + a_{i,k}^{(j)}x_k\bigr)
       \bigl(a_j^{(i)} + a_{j,k}^{(i)}x_k\bigr).$$
By Lemma \ref{lm:01} {\it (1)},
       we have $C^{i\to j}_1\leq 0$. 
By Corollary \ref {cry:csign}, $C^{i\to j}_1$ cannot be the zero polynomial. By Lemma \ref{lm:csign}, we have $C^{i\to j}_2\geq 0$. Since $G$ is consistent, we know that $C^{i\to j}_2$ cannot be the zero polynomial.  So,  
       for any $(\kappa, x_k)\in {\mathbb R}^{m+1}_{>0}$, the equation   $g_{i\to j}(\kappa, x_j, x_k)=0$ always has exactly one nondegenerate positive solution for $x_j$: 
\begin{align}\label{eq:pfuns002}
   x_j\;=\;- \frac{C^{i\to j}_1}{C^{i\to j}_2}. 
\end{align}
Denote by $\sigma(\kappa, x_k)$ the solution in  \eqref{eq:pfuns002}. Then,  $(i,j,k)$ has the positive isoline property and $\sigma:\mathcal{K}\times\mathbb{R}_{>0}\to\mathbb{R}_{>0}$ is the corresponding positive isoline function.

{\it (5)} Without loss of generality, we assume that $(i,j,k)=(1,2,3)$. By  \eqref{eq:c1}, we can write 
\begin{align*}
C^{1\to 2}_1& = A^{1\to 2}_1 + B^{1\to 2}_1,
\end{align*}
where
\begin{align}\label{eq:A12}
A^{1\to 2}_1& = a_{1,2}^{(1)}\bigl(a_0^{(2)} + a_3^{(2)}x_3\bigr) 
     - a_{1,2}^{(2)}\bigl(a_0^{(1)} + a_3^{(1)}x_3\bigr),
\end{align}
\begin{align*}
B^{1\to 2}_1& = \bigl(a_1^{(1)} + a_{1,3}^{(1)}x_3\bigr)
            \bigl(a_2^{(2)} + a_{2,3}^{(2)}x_3\bigr)
     - \bigl(a_1^{(2)} + a_{1,3}^{(2)}x_3\bigr)
       \bigl(a_2^{(1)} + a_{2,3}^{(1)}x_3\bigr).
\end{align*}
Notice that  for $(i,j,k)=(1,2,3)$,  if one of \eqref{eq:c202} and \eqref{eq:c001} holds, then 
$C^{1\to 2}_1=A^{1\to 2}_1$. 
Suppose that  in \eqref{eq:c202}--\eqref{eq:c001}, only \eqref{eq:c202} holds or only \eqref{eq:c001} holds.  
If only \eqref{eq:c202} holds, then 
$g_{1\to 2}=C^{1\to 2}_1x_2+C^{1\to 2}_0$. 
If only \eqref{eq:c001} holds, then 
$g_{1\to 2}=x_2(C^{1\to 2}_2x_2+C^{1\to 2}_1)$.
By Corollary \ref{cry:csign}, we know that $C^{1\to 2}_1$ cannot be the zero polynomial since $G$ is consistent and nondegenerate. 
So, we can solve $x_2$ from $g_{1\to 2}=0$ and get: 
\begin{align}\label{eq:21or24}
\sigma(\kappa, x_3)\;:=\;
\begin{cases}
- \frac{C^{1\to 2}_0}{C^{1\to 2}_1},\;& \;\text{if only \eqref{eq:c202} holds}  \\
- \frac{C^{1\to 2}_1}{C^{1\to 2}_2},\;& \;\text{if only \eqref{eq:c001} holds}
\end{cases}.
\end{align}
Notice that $G$ is consistent. That means ${\mathcal K}\neq \emptyset$, i.e., there exists $\kappa\in {\mathbb R}^m_{>0}$ such that $G$ has positive steady states. So, by Lemma \ref{lm:gsys}, for any $\kappa^*\in {\mathcal K}$, 
$g_{1\to 2}(\kappa^*, x_2, x_3)=0$ indeed has positive solutions for $(x_2, x_3)$.  
Also notice that by \eqref{eq:A12}, for any $\kappa\in {\mathbb R}^m_{>0}$, $C^{1\to 2}_1(\kappa, x_3)=A^{1\to 2}_1(\kappa, x_3)=0$ has at most one positive solution for $x_3$. 
So, for any $\kappa^* \in {\mathcal K}$, 
there exists an open interval ${\mathcal I}\subset {\mathbb R}_{>0}$ such that for any $x_3\in {\mathcal I}$, $x_2=\sigma(\kappa^*, x_3)$ defined in \eqref{eq:21or24} is the unique nondegenerate positive solution of 
$g_{1\to 2}(\kappa^*, x_2, x_3)=0$, and for  any $x_3\in  \left({\mathbb R}_{>0}\backslash {\mathcal I}\right)$, $g_{1\to 2}(\kappa^*, x_2, x_3)=0$ has no positive solutions. Therefore, $(1,2,3)$ has the local  positive isoline property and $\sigma^*:=\sigma(\kappa^*,x_3):{\mathcal I}\to {\mathbb R}_{>0}$ gives the local  positive function.

{\it (6)}
If both \eqref{eq:c202} and \eqref{eq:c001} hold for $(i,j,k)=(\hat{i},\hat{j},\hat{k})$ and $(i,j,k)=(\hat{i},\hat{k},\hat{j})$, then
\[
a_{\hat{i}}^{(\hat{i})}=a_{\hat{j}}^{(\hat{i})}=a_{\hat{k}}^{(\hat{i})}=a_{\hat{i},\hat{j}}^{(\hat{i})}=a_{\hat{i},\hat{k}}^{(\hat{i})}=a_{\hat{j},\hat{k}}^{(\hat{i})}=0,
\]
That means $f_{\hat{i}}=a^{(\hat{i})}_0$, which contradicts to the hypothesis that $G$ is consistent and nondegenerate.

{\it (7)} Without loss of generality, we assume that $\left(\hat{i},\hat{j},\hat{k}\right)=(1,2,3)$.  If both of \eqref{eq:c201} and \eqref{eq:c004} hold  for $(i,j,k)=(1,2,3)$ and  both \eqref{eq:c202} and \eqref{eq:c001} hold for $(i,j,k)=(1,3,2)$, then by Remark \ref{rmk:czero}, we have    $g_{1\to 2}=C^{1\to 2}_1x_2$ and $g_{1\to 3}=C^{1\to 3}_1x_3$. And 
by \eqref{eq:c1}, we have 
\begin{align}
C^{1\to 2}_1 &=  a_{1,3}^{(1)}x_3
       \bigl(a_2^{(2)} + a_{2,3}^{(2)}x_3\bigr)
    -\bigl(a_1^{(2)} +a_{1,3}^{(2)}x_3\bigr)a_2^{(1)}
   \notag \\
    &= a_{1,3}^{(1)}a_{2,3}^{(2)}x_3^2+ \bigl(a_{1,3}^{(1)}a_2^{(2)} - a_{1,3}^{(2)}a_2^{(1)}\bigr)x_3-a_1^{(2)}a_2^{(1)} \label{eq:c1211}
\\
C^{1\to 3}_1 &=a_{1,3}^{(1)}\bigl(a_0^{(3)} + a_2^{(3)}x_2\bigr)-a_{1,3}^{(3)}\bigl(a_0^{(1)} + a_2^{(1)}x_2\bigr) \label{eq:c1311}
   \end{align}
   By Lemma \ref{lm:01} {\it (1)}, we have $a_{1,3}^{(1)}a_{2,3}^{(2)}\geq 0$ and $-a_1^{(2)}a_2^{(1)}\leq 0$. So, $C^{1\to 2}_1=0$ has at most one positive solution for $x_3$ for any $\kappa\in {\mathbb R}^m_{>0}$. Notice that $C^{1\to 3}_1$ is linear in $x_2$.  So,  we conclude that $g_{1\to 2}=0$ and $g_{1\to 3}=0$ have at most  one common positive solution $(x^*_2, x^*_3)$ for any  $\kappa \in {\mathbb R}^m_{>0}$, and hence, by Lemma \ref{lm:gsys}, $G$ admits at most one positive steady state. 

{\it (8)}  Without loss of generality, we assume that $\left(\hat{i},\hat{j},\hat{k}\right)=(1,2,3)$.  Notice that  by Remark \ref{rmk:czero}, if \eqref{eq:c201} or  \eqref{eq:c004} holds  for $(i,j,k)=(1,2,3)$ and  both \eqref{eq:c202} and \eqref{eq:c001} hold for $(i,j,k)=(1,3,2)$, then 
$C^{1\to 2}_1$ and $C^{1\to 3}_1$ are also given by \eqref{eq:c1211}--\eqref{eq:c1311}. Again, we have $g_{1\to 3}=C^{1\to 3}_1x_3$.

If  only \eqref{eq:c201} holds  for $(i,j,k)=(1,2,3)$, then  $g_{1\to 2}=C^{1\to 2}_1x_2+C^{1\to 2}_0$ with $C^{1\to 2}_0<0$.
   So, we can solve $x_2$ from the equation $g_{1\to 2}(\kappa, x_2, x_3)=0$ and get
    $$x_2\;=\;- \frac{C^{1\to 2}_0}{C^{1\to 2}_1}.$$
    If $a_{1,3}^{(1)}a_{2,3}^{(2)}a_1^{(2)}a_2^{(1)}$ is not the zero polynomial, then by \eqref{eq:c1211},  $C^{1\to 2}_1=0$ always has two real solutions for $x_3$ with opposite signs for any $\kappa^*\in {\mathbb R}^m_{>0}$. Denoted by  $z_{+}(\kappa^*)$  the positive solution. 
    Notice that $a_{1,3}^{(1)}a_{2,3}^{(2)}> 0$.
    Then, for any $x_3\in {\mathcal I}:= (z_{+}(\kappa^*), +\infty)$, we have $C^{1\to 2}_1>0$ and hence, $\sigma^*(x_3):=- \frac{C^{1\to 2}_0}{C^{1\to 2}_1}|_{\kappa=\kappa^*}$ gives the only nondegenerate positive solution of $g_{1\to 2}(\kappa^*,x_2,x_3)=0$ for $x_2$. And for any $x_3\in {\mathbb R}_{>0}\backslash{\mathcal I}$, $g_{1\to 2}(\kappa^*,x_2,x_3)=0$ has no positive solutions for $x_2$.
    Therefore, $(1,2,3)$ has the local positive isoline property. Notice that if  $a_{1,3}^{(1)}$ is the zero polynomial, then $C^{1\to 2}_1\leq 0$, which leads that
    $g_{1\to 2}=0$ has no positive solution for $x_2$. By Lemma \ref{lm:gsys}, this case cannot happen since $G$ is consistent. If $a_{2,3}^{(2)}a_1^{(2)}a_2^{(1)}$ is the zero polynomial, then by \eqref{eq:c1211}, $C^{1\to 2}_1$ becomes linear in $x_3$, and  we can always find an open interval ${\mathcal I}\subset {\mathbb R}_{>0}$ such that $(1,2,3)$ has the local positive isoline property in a way similar to the proof of {\it (5)}.

    If  only \eqref{eq:c004} holds  for $(i,j,k)=(1,2,3)$, then $g_{1\to 2}=x_2(C^{1\to 2}_2x_2+C^{1\to 2}_1)$ with $C^{1\to 2}_2>0$. 
    So, we can solve $x_2$ from the equation $g_{1\to 2}(\kappa, x_2, x_3)=0$ and get
    $$x_2\;=\;- \frac{C^{1\to 2}_1}{C^{1\to 2}_2}.$$  
    Then, the conclusion holds in a similar way as in the previous argument.

{\it (9)} By Remark \ref{rmk:czero}, one of \eqref{eq:c201} and \eqref{eq:c004} holds  for all  $(i,j,k)\in \mathfrak{S}$ is equivalent to the following three groups of conditions \eqref{eq:c0041}--\eqref{eq:c0043} hold simultaneously:

\begin{subequations}\label{eq:c0041}
\begin{align}
a^{(1)}_{1,2}=a^{(2)}_{1,2}=0, \label{eq:c0041a}\\
\multicolumn{1}{c}{\text{or}} \notag\\
a^{(1)}_0=a^{(1)}_{3}=a^{(2)}_0=a^{(2)}_{3}=0; \label{eq:c0041b}
\end{align}
\end{subequations}

\begin{subequations}\label{eq:c0042}
\begin{align}
a^{(1)}_{1,3}=a^{(3)}_{1,3}=0, \label{eq:c0042a}\\
\multicolumn{1}{c}{\text{or}} \notag\\
a^{(1)}_0=a^{(1)}_{2}=a^{(3)}_0=a^{(3)}_{2}=0; \label{eq:c0042b}
\end{align}
\end{subequations}

\begin{subequations}\label{eq:c0043}
\begin{align}
a^{(2)}_{2,3}=a^{(3)}_{2,3}=0, \label{eq:c0043a}\\
\multicolumn{1}{c}{\text{or}} \notag\\
a^{(2)}_0=a^{(2)}_{1}=a^{(3)}_0=a^{(3)}_{1}=0. \label{eq:c0043b}
\end{align}
\end{subequations}

\begin{table}[h]
\centering
\begin{tabular}{|c|c|c|}
\hline
  & $C^{1\to 2}_1$  & signs of $x^2_3$, $x_3$, $1$  \\
\hline
\eqref{eq:c0041a}, \eqref{eq:c0042a},  \eqref{eq:c0043a}  &   $a_{1}^{(1)}a_{2}^{(2)}- \bigl(a_1^{(2)} + a_{1,3}^{(2)}x_3\bigr)
       \bigl(a_2^{(1)} + a_{2,3}^{(1)}x_3\bigr)$&  $-$, $-$, $?$  \\
\hline
\eqref{eq:c0041b}, \eqref{eq:c0042b},  \eqref{eq:c0043b}  &   $\bigl(a_1^{(1)} + a_{1,3}^{(1)}x_3\bigr)
       \bigl(a_2^{(2)} + a_{2,3}^{(2)}x_3\bigr)-a^{(2)}_{1,3}a^{(1)}_{2,3}x^2_3$&  $?$, $+$, $+$  \\
\hline
 \eqref{eq:c0041a}, \eqref{eq:c0042b},  \eqref{eq:c0043b}  & $\bigl(a_1^{(1)} + a_{1,3}^{(1)}x_3\bigr)
       \bigl(a_2^{(2)} + a_{2,3}^{(2)}x_3\bigr)-a^{(2)}_{1,3}a^{(1)}_{2,3}x^2_3$   &  $?$, $+$, $+$ \\
\hline
\eqref{eq:c0041a}, \eqref{eq:c0042a},  \eqref{eq:c0043b}   & $a_1^{(1)} \bigl(a_2^{(2)} + a_{2,3}^{(2)}x_3\bigr)-a^{(2)}_{1,3}x_3\bigl(a_2^{(1)}+a^{(1)}_{2,3}x_3\bigr)$   &  $-$, $?$, $+$ \\
\hline
\end{tabular}
\caption{How conditions \eqref{eq:c0041}--\eqref{eq:c0043} affect $C^{1\to 2}_1$}
\label{tab:}
\end{table}
Notice that if \eqref{eq:c0041a} holds, then  $C^{1\to 2}_0 \equiv0$ 
and $g_{1\to 2}=x_2(C^{1\to 2}_2x_2+C^{1\to 2}_1)$. 
If \eqref{eq:c0041b} holds, then
 $C^{1\to 2}_2 \equiv0$ 
and $g_{1\to 2}=C^{1\to 2}_1x_2+C^{1\to 2}_0$. 
Table~\ref{tab:} lists the expression for $C_1^{1\to 2}$ under all possible combinations of conditions \eqref{eq:c0041}--\eqref{eq:c0043}. Note that $C_1^{1\to 2}$ is a polynomial in $x_3$ of degree at most 2. For each case, we also record the sign pattern of the coefficients of $x_3^2$, $x_3$, and the constant term, which will be useful for determining the existence of positive real roots of $C_1^{1\to 2}$.
So, for each case listed in Table \ref{tab:},  we can conclude that  for any $\kappa\in {\mathbb R}^m_{>0}$, $C^{1\to 2}_1=0$ has at most one positive solution for $x_3$. By symmetry,  for any $\kappa\in {\mathbb R}^m_{>0}$, $C^{1\to 3}_1=0$ has at most one positive solution for $x_2$. Then, we can prove the conclusion by a similar way to the proof of {\it (8)}. 

 {\it (10)} 
 Suppose $(i,j,k)\in \mathfrak{S}_3$. We begin with $(i,j,k)=(1,2,3)$.
It is known that one of the conditions \eqref{eq:c202}--\eqref{eq:c001} holds for $i=1$, $j=2$ and $k=3$.
By {\it (5)}, if only one of \eqref{eq:c202} and \eqref{eq:c001} holds, then $(1,2,3)$ has the local positive isoline property. 

Assume that both \eqref{eq:c202} and \eqref{eq:c001} hold, or one of \eqref{eq:c201} and \eqref{eq:c004} holds, for $(i,j,k)=(1,2,3)$. At this stage, for $(i,j,k)=(1,3,2)$, we also need only consider the cases where both \eqref{eq:c202} and \eqref{eq:c001} hold, or one of \eqref{eq:c201} and \eqref{eq:c004} holds; for the remaining cases, the conclusion {\it (5)} implies that $(1,3,2)$ has the local positive isoline property. By {\it (6)}, it is impossible that both \eqref{eq:c202} and \eqref{eq:c001} hold simultaneously for $(i,j,k)=(1,2,3)$ and $(i,j,k)=(1,3,2)$.  

By {\it (7)}, if  both of \eqref{eq:c201} and \eqref{eq:c004} hold for $(i,j,k)=(1,2,3)$ while both \eqref{eq:c202} and \eqref{eq:c001} hold for $(i,j,k)=(1,3,2)$, 
then $G$ admits at most one positive steady state. If only one of \eqref{eq:c201} and \eqref{eq:c004} holds for $(i,j,k)=(1,2,3)$ while both \eqref{eq:c202} and \eqref{eq:c001} hold for $(i,j,k)=(1,3,2)$, then by  {\it (8)}, 
$(1,2,3)$ has the local positive isoline property. 
By symmetry, if  one of \eqref{eq:c201} and \eqref{eq:c004} holds for $(i,j,k)=(1,3,2)$ while both \eqref{eq:c202} and \eqref{eq:c001} hold for $(i,j,k)=(1,2,3)$, then either $G$ admits at most one positive steady state, or $(1,3,2)$ has the local positive isoline property.
The remaining case is that one of \eqref{eq:c201} and \eqref{eq:c004} holds for both $(i,j,k)=(1,2,3)$ and $(i,j,k)=(1,3,2)$. By symmetry, we only need to consider the situation where one of \eqref{eq:c201} and \eqref{eq:c004} holds for all $(i,j,k)\in \mathfrak{S}_3$. The conclusion then follows from {\it (9)}. 
\end{proof}


{\it Proof of Theorem \ref{thm:nomss}} 
Notice that $G$ has at least $2$ nondegenerate positive
steady states for the given $\kappa\in {\mathbb R}^m_{>0}$. That means $G$ is consistent and nondegenerate.  
If there exists $(i,j,k)\in \mathfrak{S}_3$ such that  $C^{i\to j}_0C^{i\to j}_2$ is not the zero polynomial, then by Lemma \ref{lm:c0c2nonzero}, $(i,j,k)$ has the positive isoline property, and the conclusion follows from Lemma \ref{lm:hsys}. If for all $(i,j,k)\in \mathfrak{S}_3$, 
$C^{i\to j}_0C^{i\to j}_2$ is the zero polynomial, 
then by Remark \ref{rmk:czero}, one of the conditions \eqref{eq:c204}--\eqref{eq:c002} holds for all $(i,j,k)\in \mathfrak{S}_3$.
By Lemma \ref{lm:c0c2zero} {\it (1)} and {\it (3)}, 
the condition \eqref{eq:c204}  cannot hold, and
the condition \eqref{eq:c003} cannot hold. 
By Lemma \ref{lm:c0c2zero} {\it (2)} and {\it (4)}, if  \eqref{eq:c203} holds or \eqref{eq:c002} holds for some $(i,j,k)\in \mathfrak{S}_3$, then $(i,j,k)$ has the positive isoline property. Otherwise, 
 one of \eqref{eq:c202}--\eqref{eq:c001} holds for all $(i,j,k)\in \mathfrak{S}_3$, then by Lemma \ref{lm:c0c2zero} {\it (10)}, there exists  
 $(\hat{i},\hat{j},\hat{k})$ has the local positive isoline property and the conclusion follows from Lemma \ref{lm:hsys}.    \hfill$\square$

\section{Methods}\label{sec:methods}
Our computational pipeline for classifying multistable $(3,6,3)$ quadratic zero-one networks is summarized in Fig.~\ref{fig:pipeline}. After a preprocessing step that enumerates and reduces the family to inequivalent representatives (using the algorithm of \cite{jiao2026equivalence}), the detection stage proceeds through three core steps: Jacobian-sign screening to eliminate networks incapable of multiple steady states, real root classification to identify networks admitting $3$ to $5$ nondegenerate positive steady states, and stability verification via Jacobian determinant signs to certify multistability. This pipeline refines and extends the multistability-detection strategy of \cite{jiao2025multistability} in three key aspects. First, the injectivity-based screening in \cite{jiao2025multistability} is replaced by Jacobian-sign screening, which by Theorem~\ref{thm:nomss} is  necessary  for excluding networks incapable of multistationarity (here, we do not need to check the boundary steady states or the dissipativity like the degree-theory-based methods \cite{conradi2017identifying}). Second, Theorem~\ref{thm:nomss} guarantees that multistability requires at least $3$ nondegenerate positive steady states, while Remark~\ref{rmk:bezout} gives an upper bound of $5$ via the BKK mixed volume; together these confine the real root classification step to the precise range of $3$ to $5$ positive steady states. Third, Lemma~\ref{lm:stab} provides a necessary and sufficient stability criterion for nondegenerate steady states in three-dimensional zero-one networks, improving upon the necessary  condition given in \cite[Lemma~38]{jiao2025multistability}.

\begin{center}
\captionsetup{type=figure}
\begin{tikzpicture}[
  node distance=0.45cm,
  block/.style={text width=6cm, align=center, font=\footnotesize}
]
\node (n1) [inp, block]
{Enumerate and reduce $(3,6,3)$ quadratic zero-one networks};

\node (n6) [pro, block, below=of n1]
{Jacobian-sign screening};

\node (n7) [pro, block, below=of n6]
{Real root classification:\\ $3$--$5$ nondegenerate positive steady states};

\node (n8) [pro, block, below=of n7]
{Stability verification via\\ Jacobian determinants};

\node (n9) [inp, block, below=of n8]
{$373$ multistable networks};

\draw [arrow] (n1) -- (n6);
\draw [arrow] (n6) -- (n7);
\draw [arrow] (n7) -- (n8);
\draw [arrow] (n8) -- (n9);
\end{tikzpicture}
\caption{The computational pipeline.}
\label{fig:pipeline}
\end{center}

We now describe in detail how each step of the pipeline is implemented, illustrating the procedure with concrete computational examples.
 
First, using a standard approach, for each network we form the transformed Jacobian matrix $\operatorname{J}_f(p,\lambda):={\mathcal N}{\rm diag}(\sum^{n}_{\ell=1}\lambda_{\ell}L^{(\ell)}){\mathcal Y}^{\top}{\rm diag}(p)$, where $L^{(1)},\ldots,L^{(n)}$ denote the generators of the flux cone $\{v\in {\mathbb R}_{>0}^m|{\mathcal N}v=0\}$.  
Then, we apply a Jacobian-sign
screening: if $\det\!\left(\operatorname{J}_f(p,\lambda)\right)$ is sign-definite on the positive
orthant, then by Theorem \ref{thm:nomss}, the network admits at most one positive steady state and hence cannot
be multistable, so we discard it.

Second, on the surviving networks we run \texttt{\seqsplit{RealRootClassification}}
\cite{xia2016automated} to decide whether there is a parameter region in which
the network has several nondegenerate positive steady states. 
By Theorem \ref{thm:nomss}, 
for $(3,m,3)$ quadratic zero-one networks, multistability requires at least $3$ nondegenerate positive
steady states, while  Remark \ref{rmk:bezout} limits
$5$ positive steady states at most; we therefore
search for between $3$ and $5$  nondegenerate positive steady states.
The following Example \ref{ex:rrc-exclusion} illustrates how
\texttt{\seqsplit{RealRootClassification}} excludes a network
at the second step.
\begin{example}
\label{ex:rrc-exclusion}
Consider the $(3,6,3)$ quadratic zero-one network
\[
\begin{array}{@{}l@{\qquad}l@{}}
X_2+X_3 \xrightarrow{\kappa_1} X_1+X_2,
&
X_1 \xrightarrow{\kappa_2} X_1+X_3,
\\[2pt]
X_1 \xrightarrow{\kappa_3} X_1+X_2+X_3,
&
X_1 \xrightarrow{\kappa_4} X_2,
\\[2pt]
X_1+X_3 \xrightarrow{\kappa_5} X_1,
&
X_1+X_2 \xrightarrow{\kappa_6} X_3.
\end{array}
\]
Its steady-state system is
\[
\begin{aligned}
f_1={}&-\kappa_4x_1-\kappa_6x_1x_2
              +\kappa_1x_2x_3,\\
f_2={}&\kappa_3x_1+\kappa_4x_1
              -\kappa_6x_1x_2,\\
f_3={}&\kappa_2x_1+\kappa_3x_1+\kappa_6x_1x_2
              -\kappa_5x_1x_3-\kappa_1x_2x_3.
\end{aligned}
\]
To determine whether this system can have at least $3$
nondegenerate positive steady states, we apply the following
Maple command:
\begin{lstlisting}[style=maplestyle]
F := [-k4*x1-k6*x1*x2+k1*x2*x3, k3*x1+k4*x1-k6*x1*x2,
      k2*x1+k3*x1+k6*x1*x2-k5*x1*x3-k1*x2*x3]:
P := [x1,x2,x3,k1,k2,k3,k4,k5,k6]:
R := PolynomialRing(P):
rrc := RealRootClassification(F, [], P, [], 6, 3..5, R);
\end{lstlisting}

The output contains

\begin{lstlisting}[style=mapleoutput]
RealRootClassification: FINAL RESULT:
RealRootClassification: There is no given number of real solution(s)!
\end{lstlisting}
 The output certifies that there
is no choice of positive rate constants for which the
steady-state system has between $3$ and $5$ nondegenerate
positive solutions. 
By Remark~\ref{rmk:bezout}, the system cannot
have more than $5$ nondegenerate positive steady states.
Hence, it admits at most two nondegenerate positive steady
states. Therefore, by Theorem~\ref{thm:nomss}, this network cannot
admit multistability and is discarded before the stability
verification step.
\end{example}

Third, if a network can have at least $3$ nondegenerate positive steady states,  for each sample rate-constant point \(\kappa^*\) returned by
\texttt{\seqsplit{RealRootClassification}}, we isolate the
nondegenerate positive steady states
\(
x^{(1)},\ldots,x^{(N)}
\). 
If \(N=4\) or \(N=5\), then Theorem~\ref{thm:nomss} guarantees
that at least two of these positive steady states are stable.
Hence, the corresponding parameter point $\kappa^*$ yields multistability.
It remains to consider the case \(N=3\). We evaluate
\(
\det\!\left(
\operatorname{Jac}_f(\kappa^*,x^{(\ell)})
\right)
\) for all $\ell$.
By  
Lemma~\ref{lm:stab}, $x^{(\ell)}$ is
stable if and only if \(
\det\!\left(
\operatorname{Jac}_f(\kappa^*,x^{(\ell)})
\right)
\) is negative.  Remark that if we cannot find any sample point $\kappa^*$ such that the multistability is exhibited, then we conclude that the network has no multistability for any $\kappa \in {\mathbb R}^m_{>0}$, as \texttt{RealRootClassification} returns at least one sample point from each open connected region of the complement of the discriminant hypersurface  of $f$ \cite{chen2013semialgebraic}.
 The following example illustrates how
\texttt{RealRootClassification} determines the number
of nondegenerate positive steady states and how the subsequent
Jacobian-determinant test verifies multistability.

\begin{example}
\label{ex:multistable-network}
We revisit the cell-fate decision network presented in Example \ref{ex:pip}. To determine whether this network can admit at least $3$
positive steady states, we apply the same
\texttt{\seqsplit{RealRootClassification}} command used in
the preceding example:

\begin{lstlisting}[style=maplestyle]
F := [k2*x2-k6*x1*x2-k5*x1*x3+k3*x2*x3, k1-k6*x1*x2-k3*x2*x3, k4*x1+k2*x2-k5*x1*x3-k3*x2*x3]:
P := [x1,x2,x3,k1,k2,k3,k4,k5,k6]:
R := PolynomialRing(P):
rrc := RealRootClassification(F, [], P, [], 6, 3..5, R);
\end{lstlisting}

In contrast to Example~\ref{ex:rrc-exclusion}, the output is
affirmative and has the following form:

\begin{lstlisting}[style=mapleoutput]
RealRootClassification: FINAL RESULT:
RealRootClassification: The system has given number of
real solution(s) IF AND ONLY IF
RealRootClassification: [0 <= R[1], R[2] < 0].
\end{lstlisting}

Here, \(R[1]\) and \(R[2]\) are explicit polynomials in the
rate constants. Their expressions are omitted because they
are lengthy. This output confirms the existence of a
nonempty positive parameter region in which the system has
between $3$ and $5$ nondegenerate positive steady states.
To check whether $4$ or $5$ positive steady states can
actually occur, we further run
\begin{lstlisting}[style=maplestyle]
res45 := RealRootClassification(F, N, P, H, 6, 4..5, R);
\end{lstlisting}
Maple returns
\begin{lstlisting}[style=mapleoutput]
RealRootClassification: FINAL RESULT:
RealRootClassification: There is no given number of real solution(s)!
\end{lstlisting}
provided that the corresponding border polynomial that generates the discriminant hypersurface is nonzero.
Thus, outside the discriminant hypersurface, the system cannot
have $4$ or $5$ positive steady states.  That means we cannot have more than $3$ nondegenerate positive steady states. Hence, the number of nondegenerate positive steady states is at most $3$.

We then run the sample-point version for exactly $3$ nondegenerate positive
steady states:
\begin{lstlisting}[style=maplestyle]
res3 := RealRootClassification(F, N, P, H, 6, 3, R, output = samples);
\end{lstlisting}
The procedure returned a single sample point (this suggests that the parameter region supporting $3$ nondegenerate positive steady states is connected):
\begin{lstlisting}[style=mapleoutput]
[k6 = 1/2, k5 = 1/2, k4 = 1/2, k3 = 1/2,k2 =31/4096, k1 = 119/1024]
\end{lstlisting}
Therefore, for the stability verification, we use the following
rational parameter point:
\[
\kappa^*=(\kappa_1,\kappa_2,\kappa_3,\kappa_4,\kappa_5,\kappa_6)
=
\left(
\frac{119}{1024},
\frac{31}{4096},
\frac12,
\frac12,
\frac12,
\frac12
\right).
\]
After substituting these values into the steady-state system
and isolating its positive solutions, we obtain exactly $3$
positive steady states:
\[
\begin{aligned}
x^{(1)}
&=(0.04224916,\;3.33414617,\;0.02746041)^\top,\\
x^{(2)}
&=(0.10764284,\;1.10613000,\;0.10247883)^\top,\\
x^{(3)}
&=(0.19301892,\;0.46942690,\;0.30209943)^\top.
\end{aligned}
\]
The Jacobian matrix of the system is
\[
\operatorname{Jac}_f(\kappa,x)=
\begin{pmatrix}
-\kappa_6x_2-\kappa_5x_3
&
\kappa_2-\kappa_6x_1+\kappa_3x_3
&
-\kappa_5x_1+\kappa_3x_2
\\
-\kappa_6x_2
&
-\kappa_6x_1-\kappa_3x_3
&
-\kappa_3x_2
\\
\kappa_4-\kappa_5x_3
&
\kappa_2-\kappa_3x_3
&
-\kappa_5x_1-\kappa_3x_2
\end{pmatrix}.
\]
 It is directly to check that
\[
\det\!\left(
\operatorname{Jac}_f(\kappa^*,x^{(1)})
\right)<0,\;
\det\!\left(
\operatorname{Jac}_f(\kappa^*,x^{(2)})
\right)>0,\;
\det\!\left(
\operatorname{Jac}_f(\kappa^*,x^{(3)})
\right)<0.
\]
It follows from Lemma~\ref{lm:stab} that \(x^{(1)}\) and
\(x^{(3)}\) are stable, whereas \(x^{(2)}\) is unstable.
Therefore, the network admits multistability; see
Fig.~\ref{example_multistability}.

\begin{center}
\captionsetup{type=figure}
\includegraphics[width=0.95\columnwidth]{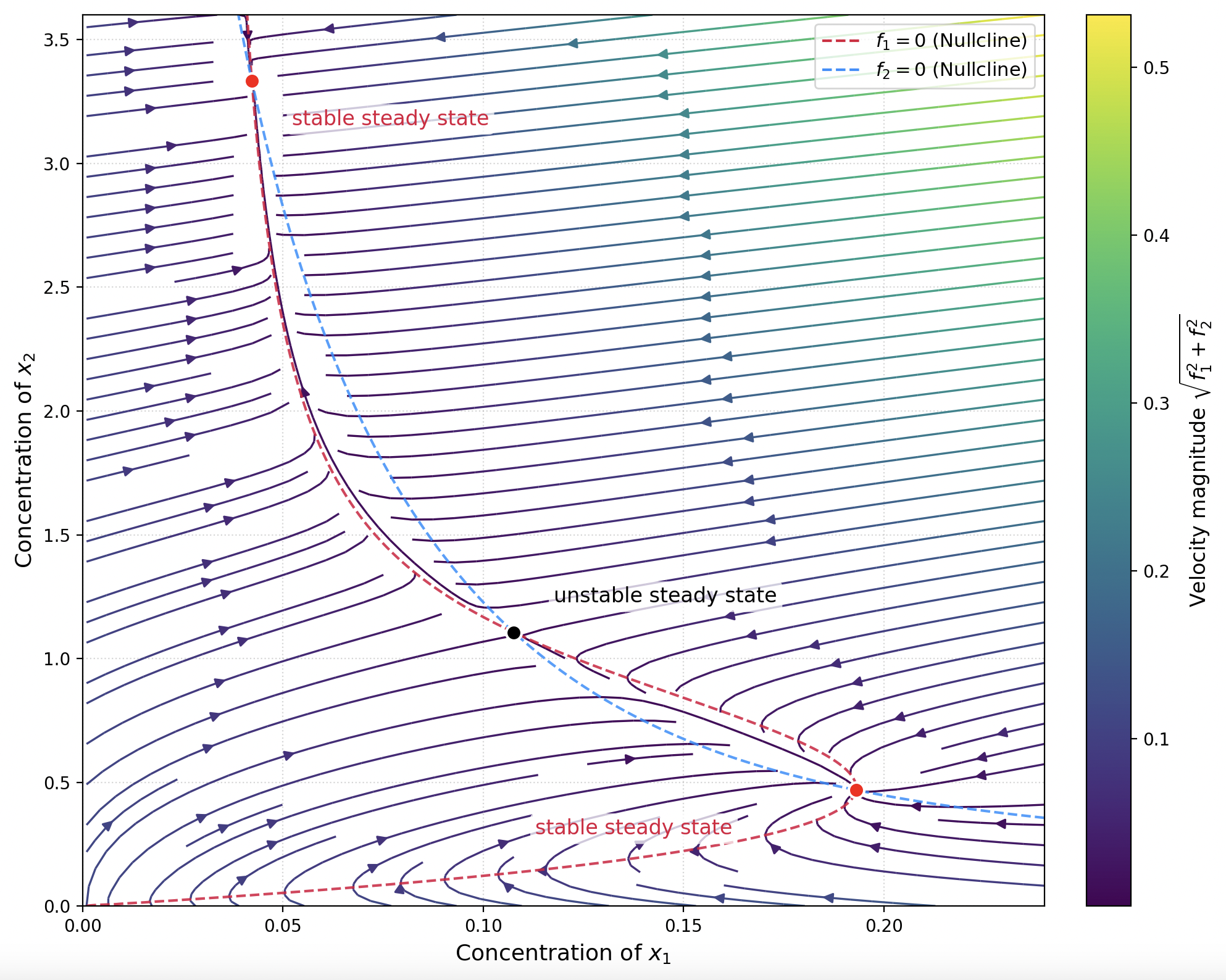}
\caption{The $3$ positive steady states of the network in Example~1. The
states $x^{(1)}$ and $x^{(3)}$ are stable, while $x^{(2)}$ is unstable.}
\label{example_multistability}
\end{center}

 As shown in Fig.~\ref{example_multistability}, the cell-fate decision network exhibits bistability:
two stable steady states correspond to distinct cell fates. 
 Recall that $x_1, x_2$ and $x_3$ denote the concentrations of $\mathbf{D}$, $\mathbf{P}$ and $\mathbf{C}$. 
The state with high
$\mathbf{P}$ and low $\mathbf{D}$ ($x^{(1)}$) represents the pluripotent
state, where the stemness factor dominates; the state with low $\mathbf{P}$ and
moderate $\mathbf{D}$ ($x^{(3)}$) represents the differentiated state.
The intermediate state ($x^{(2)}$) is unstable and acts as a threshold
separating the two basins of attraction. This switch-like behavior enables
robust cell-fate commitment without graded input signals.
\end{example}


\section{Results}
\label{sec:results}
 
We apply the pipeline of Section~\ref{sec:methods} to the class of
\((3,6,3)\) quadratic zero-one networks. Large-scale grouping and comparison
are implemented in \texttt{C}, while 
 steady-state-system generation, Gr\"obner basis computations (needed in the equivalence reduction \cite{jiao2025equivalence}), and Jacobian-sign screening are carried out
in \texttt{Mathematica} \cite{mathematica2024}. We use
\texttt{Maple} \cite{maplesoft2020} for
\texttt{RealRootClassification} and \texttt{Python} scripts to parse and
summarize the output. All computations were performed on a \(2.70\)\,GHz
Intel Core i5-11400H processor with \(16\)\,GB of memory under Windows~11.
The full computational results are available
online.\footnote{\url{https://github.com/zjdong-sudo/paper-multistability-results}}

Following \cite{jiao2026equivalence}, we exclude the strictly quadratic subclass (where every reaction has a molecularity of exactly~
$2$). Since it was shown in \cite{jiao2026equivalence} that this subclass admits no positive steady states for generic rate constants (as the homogeneous steady-state system is always degenerate), we restrict our search to the remaining quadratic networks. Table~\ref{tab:equiv_results}
reports the number of objects surviving each filter of the equivalence
reduction according to the steps of \cite[Algorithm 1]{jiao2026equivalence}. 
 
\begin{center}
\captionsetup{type=table}
\caption{Equivalence reduction for $(3,6,3)$ quadratic zero-one networks}
\label{tab:equiv_results}
\small
\setlength{\tabcolsep}{6pt}
\begin{tabular*}{\columnwidth}{@{\extracolsep{\fill}}lcc@{}}
\toprule
Filter & Objects retained & Time \\
\midrule
\multicolumn{3}{c}{Stoichiometric matrices} \\
\midrule
Candidate matrices (rank $3$) & 168699 & 7 min \\
Consistency filtering         & 87624  & 17 min \\
Same-form reduction           & 14895  & 40 min \\
\midrule
\multicolumn{3}{c}{Reaction networks} \\
\midrule
Reactant-matrix reconstruction & 530480 & 20 min \\
RREF / permutation grouping       & 409232 & 20 min \\
Vacuous/monomial-ideal removal    & 349763 & 1 h \\
Steady-state ideal reduction   & 349597 & 30 min \\
\bottomrule
\end{tabular*}
\end{center}
 
The reduction removes most redundancy before the multistability detection stage. Notably, the
steady-state ideal reduction lowers the count only from $349763$ to $349597$,
so for larger searches this final step may be omitted when a complete
equivalence classification is not required.
 
We then apply the multistability detection stage to the surviving networks.
Table~\ref{tab:multi_results} reports the result of its three filters according to Fig.~\ref{fig:pipeline}.
 
\begin{center}
\captionsetup{type=table}
\caption{Multistability detection for $(3,6,3)$ quadratic zero-one networks}
\label{tab:multi_results}
\small
\setlength{\tabcolsep}{6pt}
\begin{tabular*}{\columnwidth}{@{\extracolsep{\fill}}lcc@{}}
\toprule
Filter & Count & Time \\
\midrule
Jacobian-sign screening                       & 245468 & 1 min \\
Real root classification                      & 375 & 4 h \\
Stability verification            & 373    & 1 min \\
\bottomrule
\end{tabular*}
\end{center}
 
Notice that \texttt{RealRootClassification} returns $375$ networks (from the $245468$ networks that passed the Jacobian-sign screening) that
admit at least $3$ nondegenerate positive steady states; remarkably, each of them admits
exactly $3$. Finally, the stability verification confirms that \(373\)  networks are identified as multistable.

\begin{theorem}\label{thm:main}
A \((3,6,3)\) quadratic zero-one reaction network admits at most
$3$ nondegenerate positive steady states. There are \(375\) such
networks that admit exactly $3$ nondegenerate positive steady states, among
which \(373\) admit multistability.
\end{theorem}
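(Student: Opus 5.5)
The plan is to obtain Theorem~\ref{thm:main} as a certified computational classification. Theorem~\ref{thm:nomss} and Remark~\ref{rmk:bezout} reduce the search at each stage to finitely many exact symbolic checks.

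\emph{Step 1: finite family.} I would begin by enumerating all $(3,6,3)$ quadratic zero-one networks. Strictly quadratic networks are removed, since their homogeneous steady-state systems are degenerate by \cite{jiao2026equivalence}. Consistency is checked at the level of stoichiometric matrices, and networks of the same form, as well as equivalent networks (same steady-state ideal after relabeling), are identified using \cite[Algorithm 1]{jiao2026equivalence}. Equivalent networks have identical positive steady states, and hence identical nondegeneracy and stability properties. So it suffices to prove the statement for the $349597$ representatives in Table~\ref{tab:equiv_results}.

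\emph{Step 2: screening.} For each representative I would compute the convex-parametrized Jacobian $\det\operatorname{J}_f(p,\lambda)$. Every positive steady state and rate vector arises from some positive $(p,\lambda)$. If this determinant is sign-definite on the positive orthant, then any two nondegenerate positive steady states would have Jacobian determinants of the same sign. By Theorem~\ref{thm:nomss}(1), consecutive states must have opposite signs, so such a network has at most one nondegenerate positive steady state and is discarded. For the $245468$ survivors I would run \texttt{RealRootClassification} on the count range $3..5$. The upper bound $5$ is legitimate by the BKK bound of Remark~\ref{rmk:bezout}. For networks where the answer is negative, at most $2$ nondegenerate positive steady states occur. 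For each of the remaining $375$ networks I would run the range $4..5$ and obtain the empty answer off the border polynomial, as in Example~\ref{ex:multistable-network}.

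\emph{Step 3: extending to all parameters and checking stability.} Networks whose degenerate states lie only on the discriminant hypersurface need one further argument. If some $\kappa$ on the hypersurface gave $4$ nondegenerate positive steady states, these would persist under a small perturbation by the implicit function theorem. The perturbation can be chosen to land off the hypersurface, contradicting the $4..5$ result. This gives the bound of $3$. For stability I would use the sample points, one per connected open cell of the complement of the discriminant. Within each cell the number of states and the signs of their Jacobian determinants are constant. For each sample point with $N=3$, I would isolate the three states and evaluate the determinant signs; by Lemma~\ref{lm:stab}, two negative signs certify multistability. Theorem~\ref{thm:nomss}(2) forces the signs to be either $(-,+,-)$ or $(+,-,+)$. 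The $2$ exceptional networks are exactly those where every cell gives the pattern $(+,-,+)$. Multistability on the discriminant hypersurface itself would again persist to a nearby generic point, so it is detected by the sample points as well.

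\emph{Main obstacle.} The delicate point is the rigor and completeness of the real root classification step. This covers correct handling of the border polynomial, the positivity constraints on $x$ and $\kappa$, and the guarantee that the sample points meet every cell. I would rely on the semialgebraic guarantees of \cite{chen2013semialgebraic,xia2016automated}, together with the perturbation argument above, to pass from generic parameters to all parameters.
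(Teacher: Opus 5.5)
Your proposal is correct and follows the paper's own route essentially step for step: reduction to the $349597$ inequivalent representatives, Jacobian-sign screening justified by Theorem~\ref{thm:nomss}, \texttt{RealRootClassification} for between $3$ and $5$ states (capped by the BKK bound) and then for $4$ or $5$, and stability certification at the sample points via Lemma~\ref{lm:stab}. Your implicit-function-theorem perturbation argument is a useful addition, since it spells out the passage from parameters off the border hypersurface to all parameters, which the paper only asserts.

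One small caution: equal steady-state ideals give the same positive steady states and the same nondegeneracy (for generic $\kappa$), but they do not by themselves give the same Jacobian-determinant signs, so your claim that equivalence preserves stability does not follow as stated. This is harmless here, because the counts $375$ and $373$ refer to the inequivalent representatives themselves.
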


\begin{example}\label{ex:special}
The following two networks are the only two among the \(375\) candidates that were not certified as multistable by the subsequent stability test.
The first network is
\[
\begin{array}{@{}l@{\qquad}l@{}}
X_3 \xrightarrow{\kappa_1} X_1,
&
X_2 \xrightarrow{\kappa_2} X_1+X_3,
\\[2pt]
X_2+X_3 \xrightarrow{\kappa_3} 0,
&
X_1 \xrightarrow{\kappa_4} 0,
\\[2pt]
X_1+X_3 \xrightarrow{\kappa_5} X_1+X_2+X_3,
&
X_1+X_2 \xrightarrow{\kappa_6} X_1+X_2+X_3.
\end{array}
\]
The second network is
\[
\begin{array}{@{}l@{\qquad}l@{}}
X_3 \xrightarrow{\kappa_1} X_1+X_2+X_3,
&
X_2 \xrightarrow{\kappa_2} 0,
\\[2pt]
X_2+X_3 \xrightarrow{\kappa_3} X_1,
&
X_1 \xrightarrow{\kappa_4} 0,
\\[2pt]
X_1+X_3 \xrightarrow{\kappa_5} X_1+X_2,
&
X_1+X_2 \xrightarrow{\kappa_6} X_1+X_2+X_3.
\end{array}
\]
As an example, we illustrate the stability verification for the second network. Its
steady-state system is
\[
\begin{aligned}
f_1
&=-\kappa_4x_1+\kappa_1x_3+\kappa_3x_2x_3,\\
f_2
&=-\kappa_2x_2+\kappa_1x_3+\kappa_5x_1x_3
-\kappa_3x_2x_3,\\
f_3
&=\kappa_6x_1x_2-\kappa_5x_1x_3-\kappa_3x_2x_3.
\end{aligned}
\]
Notice that \texttt{RealRootClassification} returns only one sample point for $3$ nondegenerate positive solutions:
\[
\kappa^*
=
\left(
\frac{555}{2048},
\frac{103}{65536},
\frac12,\frac12,\frac12,\frac12
\right).
\]
And the $3$ nondegenerate positive steady states
are \[
\begin{aligned}
x^{(1)}&=(0.03170,\ 0.5191,\ 0.02987)^\top,\\
x^{(2)}&=(0.09948,\ 0.6188,\ 0.08570)^\top,\\
x^{(3)}&=(0.2491,\ 0.7782,\ 0.1887)^\top.
\end{aligned}
\]
The Jacobian matrix of the mass-action system is
\[
\operatorname{Jac}_f(\kappa,x)
=
\begin{pmatrix}
-\kappa_4
&
\kappa_3x_3
&
\kappa_1+\kappa_3x_2
\\
\kappa_5x_3
&
-\kappa_2-\kappa_3x_3
&
\kappa_1+\kappa_5x_1-\kappa_3x_2
\\
\kappa_6x_2-\kappa_5x_3
&
\kappa_6x_1-\kappa_3x_3
&
-\kappa_5x_1-\kappa_3x_2
\end{pmatrix}.
\]
 It is directly to check that
\[
\det\!\left(
\operatorname{Jac}_f(\kappa^*,x^{(1)})
\right)>0,\;
\det\!\left(
\operatorname{Jac}_f(\kappa^*,x^{(2)})
\right)<0,\;
\det\!\left(
\operatorname{Jac}_f(\kappa^*,x^{(3)})
\right)>0.
\]
So, it follows from Lemma~\ref{lm:stab} that \(x^{(2)}\) is stable,
whereas \(x^{(1)}\) and \(x^{(3)}\) are unstable. Therefore, this
sample parameter point has $3$ nondegenerate positive steady
states but does not yield multistability; see
Fig.~\ref{remaining-network}.

This network lacks the defining motif of cell-fate switches: a self-sustaining positive feedback for the pluripotency factor ${\mathbf P}$ ($X_2$). Instead,  ${\mathbf P}$ ($X_2$)  is merely produced and degraded without autocatalytic maintenance, while ${\mathbf D}$ ($X_1$)  acts catalytically rather than as a terminal effector. Such architecture does not match known gene-regulatory logic for stem-cell maintenance, making the observed ``three steady states with only one stable" behavior biologically implausible as a decision mechanism.

\begin{center}
\captionsetup{type=figure}
\includegraphics[width=0.95\columnwidth]{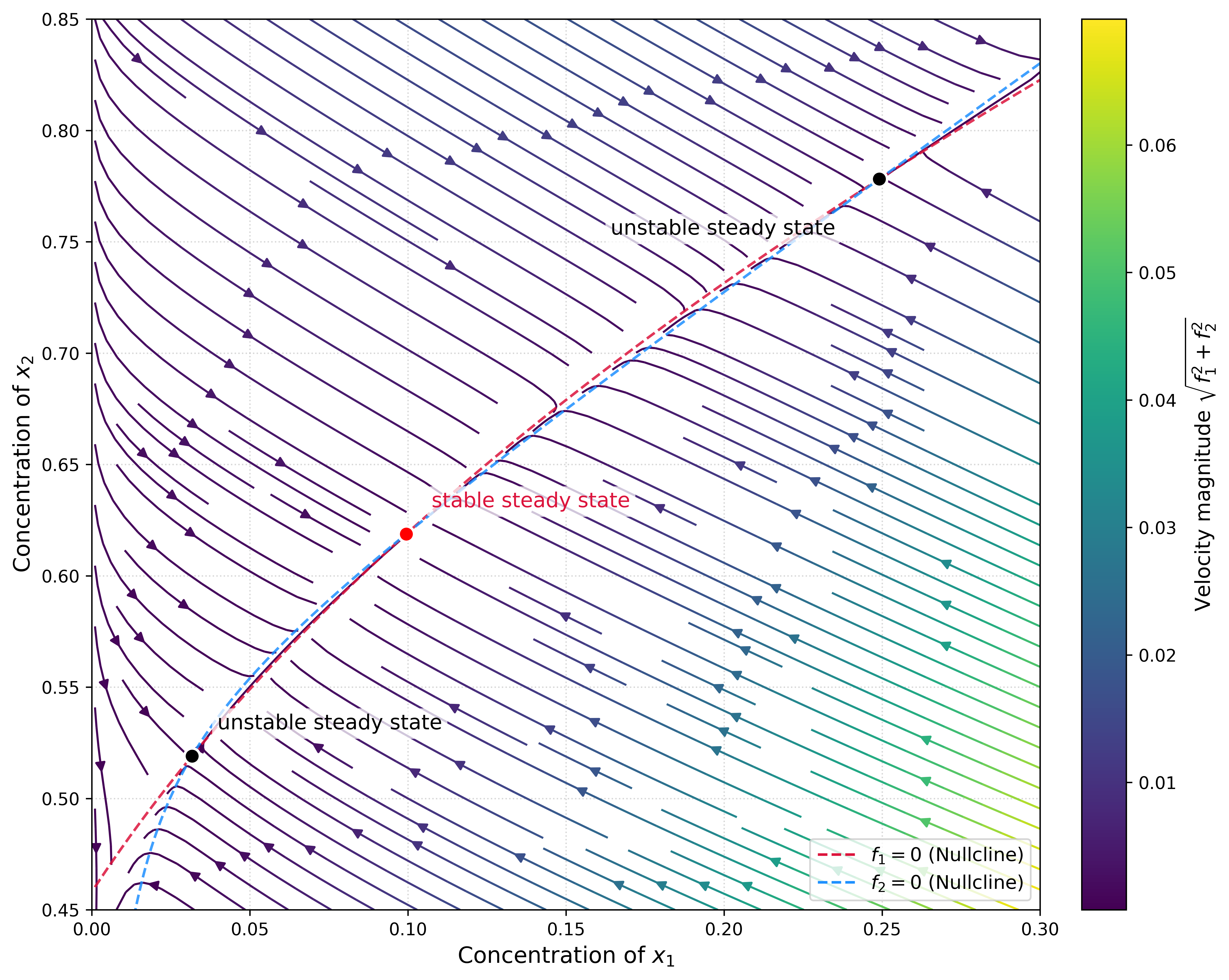}
\caption{The $3$ positive steady states of the second network
in Example~\ref{ex:special} at the parameter point
\(\kappa^*\). The state \(x^{(2)}\) is stable, while
\(x^{(1)}\) and \(x^{(3)}\) are unstable.}
\label{remaining-network}
\end{center}


\end{example}

\subsection{Why exactly three positive steady states?}\label{sec:why-three}

A striking feature of Table~\ref{tab:multi_results} is that, although
\texttt{\seqsplit{RealRootClassification}} was queried for parameter regions admitting
between $3$ and $5$ positive steady states, every one of the $375$ networks
that passed this filter admits exactly $3$; none attains $4$ or
more. Although a complete explanation of this bound is left for future work, here we
provide an explanation for Example 
\ref{ex:multistable-network}.

\begin{example}\label{ex:why-three}
We revisit the steady-state system in
Example~\ref{ex:multistable-network}. Let
\[
I=\langle f_1,f_2,f_3\rangle
\subseteq
\mathbb{Q}(\kappa_1,\ldots,\kappa_6)[x_1,x_2,x_3]
\]
be its steady-state ideal. We compute a Gr\"obner basis of
\(I\) with respect to the lexicographic order
\(
x_1\succ x_3\succ x_2.
\)
By the elimination theorem, the resulting Gr\"obner basis
contains a polynomial involving only \(x_2\). Up to a
nonzero constant factor, this elimination polynomial is
\[
\begin{aligned}
Q(\kappa, x_2)={}&
9\kappa_2\kappa_3\kappa_6^2x_2^4
+ \left(
6\kappa_2\kappa_3\kappa_4\kappa_6
-3\kappa_1\kappa_3\kappa_6^2
\right)x_2^3
+
\left(
\kappa_2\kappa_3\kappa_4^2
+2\kappa_1\kappa_3\kappa_4\kappa_6
\right)x_2^2\\
&+
\left(
\kappa_1\kappa_3\kappa_4^2
-2\kappa_1^2\kappa_5\kappa_6
\right)x_2-
2\kappa_1^2\kappa_4\kappa_5.
\end{aligned}
\]
The steady-state system also 
gives
\[
x_1=
\frac{2\kappa_1}{\kappa_4+3\kappa_6x_2},
\qquad
x_3=
\frac{\kappa_1(\kappa_4+\kappa_6x_2)}
{\kappa_3x_2(\kappa_4+3\kappa_6x_2)}.
\]
These two quantities are positive and uniquely determined
whenever \((\kappa, x_2)\in {\mathbb R}^{m+1}_{>0}\). Hence, the positive steady states are in
one-to-one correspondence with the positive real roots of
\(Q(\kappa^*, x_2)\) for any given $\kappa^* \in {\mathbb R}^m_{>0}$.

Since \(Q(\kappa^*,x_2)\) has at most $4$ real roots and the product of all roots is 
$-\frac{2{\kappa_1^*}^2\kappa^*_4\kappa^*_5}{9\kappa^*_2\kappa^*_3{\kappa_6^*}^2}<0$, 
\(Q(\kappa^*,x_2)\) has at most $3$ positive real roots. 
Therefore,
the network in Example~\ref{ex:multistable-network} admits
at most $3$ positive steady states.
\end{example}
\section{Discussion}\label{sec:discussion}

In this work, we have provided a complete classification of multistability in $(3, 6, 3)$ quadratic zero-one networks, identifying $373$ networks that exhibit bistability. These minimal networks serve as a comprehensive ``atlas" of the simplest biochemical switches capable of cell-fate decisions.

A natural and significant extension of this work is the systematic analysis of the structural commonalities among these $373$ multistable networks. While our results establish the existence and limits of multistability in this class, understanding the design principles underlying these networks remains an open challenge. Future work should investigate whether these networks universally share specific topological motifs such as particular configurations of positive feedback loops or mutual inhibition and how the roles of individual species (e.g., as substrates or enzymes) are distributed across this ensemble.

Furthermore, it is crucial to theoretically elucidate why these minimal networks are strictly bounded by three positive steady states. Establishing a rigorous proof for this upper bound would reveal the intrinsic algebraic constraints governing small-scale reaction systems and distinguish the capacity limits of minimal switches from larger, more complex networks. 

Uncovering these structural signatures and theoretical bounds would bridge the gap between the exhaustive enumeration presented here and the rational design of robust synthetic biological circuits. To facilitate such studies, we have made the full list of these $373$ networks publicly available in our repository.

\section*{Declarations}

Conflict of interest: The authors declare that they have no conflict of interest.

Data availability: All the supporting codes and data are available online at
\href{https://github.com/zjdong-sudo/paper-multistability-results}
{https://github.com/zjdong-sudo/paper-multistability-results}.
The repository contains three main folders, ``code'', ``data'', and ``results'',
together with a file ``README.md''.
The folder ``code'' contains the implementations of the equivalence-reduction
procedure 
and the
multistability-detection procedure presented in
Section~\ref{sec:methods}.
The folder ``data'' contains the intermediate computational data corresponding
to the results reported in Tables~\ref{tab:equiv_results} and
\ref{tab:multi_results}.
The folder ``results'' contains the \(375\) networks admitting $3$
nondegenerate positive steady states, together with the stability-verification
results identifying \(373\) of them as multistable, as summarized in
Theorem~\ref{thm:main}.



\bibliography{reference}

@inproceedings{jiao2025equivalence,
  author    = {Jiao, Yue and Tang, Xiaoxian},
  title     = {An efficient algorithm for determining the equivalence of zero-one reaction networks},
  booktitle = {ISSAC '25: Proceedings of the 2025 International Symposium on Symbolic and Algebraic Computation},
  pages     = {277--283},
  year      = {2025},
  doi       = {10.1145/3747199.3747571}
}

@article{jiao2025multistability,
  author={Jiao, Yue and Tang, Xiaoxian and Zeng, Xiaowei},
  title={Multistability of small zero-one reaction networks},
  journal={Journal of Mathematical Biology},
  volume={91},
  number={6},
  pages={82},
  year={2025},
  doi={10.1007/s00285-025-02306-w}
}

@article{banaji2024oscillations,
  author={Banaji, Murad and Boros, Bal{\'a}zs and Hofbauer, Josef},
  title={Oscillations in three-reaction quadratic mass-action systems},
  journal={Studies in Applied Mathematics},
  volume={152},
  number={1},
  pages={249--278},
  year={2024},
  doi={10.1111/sapm.12639}
}

@article{tang2023hopf,
  author={Tang, Xiaoxian and Wang, Kaizhang},
  title={Hopf bifurcations of reaction networks with zero-one stoichiometric coefficients},
  journal={SIAM Journal on Applied Dynamical Systems},
  volume={22},
  number={3},
  pages={2459--2489},
  year={2023},
  doi={10.1137/22M1519754}
}

@incollection{conradi2019multistationarity,
  author    = {Conradi, Carsten and Pantea, Casian},
  title     = {Multistationarity in biochemical networks: results, analysis, and examples},
  booktitle = {Algebraic and Combinatorial Computational Biology},
  pages     = {279--317},
  year      = {2019},
  doi       = {10.1016/B978-0-12-814066-6.00009-X}
}

@article{banaji2016injectivity,
  author  = {Banaji, Murad and Pantea, Casian},
  title   = {Some results on injectivity and multistationarity in chemical reaction networks},
  journal = {SIAM Journal on Applied Dynamical Systems},
  volume  = {15},
  number  = {2},
  pages   = {807--869},
  year    = {2016},
  doi     = {10.1137/15M1034441}
}

@article{craciun2005multiple1,
  author  = {Craciun, Gheorghe and Feinberg, Martin},
  title   = {Multiple equilibria in complex chemical reaction networks: {I}. The injectivity property},
  journal = {SIAM Journal on Applied Mathematics},
  volume  = {65},
  pages   = {1526--1546},
  year    = {2005},
  doi     = {10.1137/S0036139904440278}
}

@article{conradi2017identifying,
  author  = {Conradi, Carsten and Feliu, Elisenda and Mincheva, Maya and Wiuf, Carsten},
  title   = {Identifying parameter regions for multistationarity},
  journal = {PLoS Computational Biology},
  volume  = {13},
  number  = {10},
  pages   = {e1005751},
  year    = {2017},
  doi     = {10.1371/journal.pcbi.1005751}
}

@article{craciun2008homotopy,
  author  = {Craciun, Gheorghe and Helton, J. William and Williams, Raymond J.},
  title   = {Homotopy methods for counting reaction network equilibria},
  journal = {Mathematical Biosciences},
  volume  = {216},
  number  = {2},
  pages   = {140--149},
  year    = {2008},
  doi     = {10.1016/j.mbs.2008.09.001}
}

@article{enciso2014fixed,
  author  = {Enciso, German A.},
  title   = {Fixed points and convergence in monotone systems under positive or negative feedback},
  journal = {International Journal of Control},
  volume  = {87},
  number  = {2},
  pages   = {301--311},
  year    = {2014},
  doi     = {10.1080/00207179.2013.830336}
}

@misc{maplesoft2020,
  author       = {{Maplesoft, a division of Waterloo Maple Inc.}},
  title        = {Maplesoft},
  year         = {2020},
  address      = {Waterloo, Ontario}
}

@article{banaji2018inheritance,
  author  = {Banaji, Murad and Pantea, Casian},
  title   = {The inheritance of nondegenerate multistationarity in chemical reaction networks},
  journal = {SIAM Journal on Applied Mathematics},
  volume  = {78},
  number  = {2},
  pages   = {1105--1130},
  year    = {2018},
  doi     = {10.1137/16M1103506}
}

@misc{mathematica2024,
  author       = {{Wolfram Research, Inc.}},
  title        = {Mathematica},
  year         = {2024},
  note         = {Version 14.2},
  address      = {Champaign}
}

@article{banaji2018oscillation,
  author  = {Banaji, Murad},
  title   = {Inheritance of oscillation in chemical reaction networks},
  journal = {Applied Mathematics and Computation},
  volume  = {325},
  pages   = {191--209},
  year    = {2018},
  doi     = {10.1016/j.amc.2017.12.012}
}

@article{hilioti2008oscillatory,
  author  = {Hilioti, Zoe and Sabbagh, Walid Jr. and Paliwal, Swati and Bergmann, Andreas and Goncalves, M. D. and Bardwell, Lee and others},
  title   = {Oscillatory phosphorylation of yeast Fus3 {MAP} kinase controls periodic gene expression and morphogenesis},
  journal = {Current Biology},
  volume  = {18},
  number  = {21},
  pages   = {1700--1706},
  year    = {2008},
  doi     = {10.1016/j.cub.2008.09.027}
}

@article{sible2007mathematical,
  author  = {Sible, Jill C. and Tyson, John J.},
  title   = {Mathematical modeling as a tool for investigating cell cycle control networks},
  journal = {Methods},
  volume  = {41},
  number  = {2},
  pages   = {238--247},
  year    = {2007},
  doi     = {10.1016/j.meth.2006.08.003}
}

@article{tyson2022timekeeping,
  author  = {Tyson, John J. and Novak, B{\'e}la},
  title   = {Time-keeping and decision-making in the cell cycle},
  journal = {Interface Focus},
  volume  = {12},
  number  = {4},
  pages   = {20210075},
  year    = {2022},
  doi     = {10.1098/rsfs.2021.0075}
}

@article{janiakspens2005kinetic,
  author  = {Janiak-Spens, Frances and Cook, Paul F. and West, Alan H.},
  title   = {Kinetic analysis of {YPD1}-dependent phosphotransfer reactions in the yeast osmoregulatory phosphorelay system},
  journal = {Biochemistry},
  volume  = {44},
  number  = {1},
  pages   = {377--386},
  year    = {2005},
  doi     = {10.1021/bi048433s}
}

@article{ferrell1998allornone,
  author  = {Ferrell, James E. Jr. and Machleder, E. M.},
  title   = {The biochemical basis of an all-or-none cell fate switch in {Xenopus} oocytes},
  journal = {Science},
  volume  = {280},
  number  = {5365},
  pages   = {895--898},
  year    = {1998},
  doi     = {10.1126/science.280.5365.895}
}

@article{bagowski2001bistability,
  author  = {Bagowski, Christopher and Ferrell, James E. Jr.},
  title   = {Bistability in the {JNK} cascade},
  journal = {Current Biology},
  volume  = {11},
  pages   = {1176--1182},
  year    = {2001},
  doi     = {10.1016/S0960-9822(01)00330-X}
}

@article{xiong2003memory,
  author  = {Xiong, Wei and Ferrell, James E. Jr.},
  title   = {A positive-feedback-based bistable `memory module' that governs a cell fate decision},
  journal = {Nature},
  volume  = {426},
  number  = {6965},
  pages   = {460--465},
  year    = {2003},
  doi     = {10.1038/nature02089}
}

@article{craciun2006understanding,
  author  = {Craciun, Gheorghe and Tang, Yangzhong and Feinberg, Martin},
  title   = {Understanding bistability in complex enzyme-driven reaction networks},
  journal = {Proceedings of the National Academy of Sciences of the United States of America},
  volume  = {103},
  number  = {23},
  pages   = {8697--8702},
  year    = {2006},
  doi     = {10.1073/pnas.0602767103}
}

@article{chen2013semialgebraic,
  author  = {Chen, Changbo and Davenport, James H. and Moreno Maza, Marc and Xia, Bican and Xiao, Rong},
  title   = {Computing with semi-algebraic sets: relaxation techniques and effective boundaries},
  journal = {Journal of Symbolic Computation},
  volume  = {52},
  pages   = {72--96},
  year    = {2013},
  doi     = {10.1016/j.jsc.2012.05.013}
}

@book{xia2016automated,
  author    = {Xia, Bican and Yang, Lu},
  title     = {Automated inequality proving and discovering},
  publisher = {World Scientific},
  address   = {Singapore},
  year      = {2016},
  doi       = {10.1142/9951}
}

@misc{jiao2026equivalence,
  author        = {Jiao, Yue and Tang, Xiaoxian},
  title         = {Determining the equivalence of small zero-one reaction networks},
  year          = {2026},
  eprint        = {2503.00008},
  archiveprefix = {arXiv},
  primaryclass  = {q-bio.MN},
  url           = {https://arxiv.org/abs/2503.00008}
}

@article{bernstein1975numberofroots,
  author  = {Bernstein, D. N.},
  title   = {The number of roots of a system of equations},
  journal = {Functional Analysis and its Applications},
  volume  = {9},
  number  = {3},
  pages   = {183--185},
  year    = {1975},
  doi     = {10.1007/BF01075595}
}

@article{TorresFeliu2021,
  author  = {Torres, Ang{\'e}lica and Feliu, Elisenda},
  title   = {Symbolic proof of bistability in reaction networks},
  journal = {SIAM Journal on Applied Dynamical Systems},
  volume  = {20},
  number  = {1},
  pages   = {1--37},
  year    = {2021},
  doi     = {10.1137/20M1326672}
}

@article{rouillier1999solving,
   author={Rouillier, Fabrice},
  title={Solving zero-dimensional systems through the rational univariate representation},
 journal={Applicable Algebra in Engineering, Communication and Computing},
  volume={9},
  number={5},
  pages={433--461},
  year={1999},
  doi = {10.1007/s002000050114}
}

@inproceedings{gianni1989algebraic,
 author={Gianni, Patrizia and Mora, Teo},
  title={Algebraic solution of systems of polynomial equations using {Gr\"obner} bases},
   booktitle={Applied Algebra, Algebraic Algorithms and Error-Correcting Codes},
  series={LNCS},
  volume={356},
  pages={247--257},
  year={1989},
  doi={10.1007/3-540-51082-6_83},
  publisher={Springer},
  address		= {Heidelberg}
}

\end{document}